\theoremstyle{definition}
\newtheorem{defin}{Definition}
\newtheorem{ejem}{Example}
\theoremstyle{definition}
\newtheorem{teo}{Theorem}
\newtheorem{cor}{Corollary}
\newtheorem{prop}{Proposition}
\newtheorem{lema}{Lemma}
\theoremstyle{remark}
\newtheorem{rem}{Remark}
\def\ZZ{\mathbb{Z}}
\def\RR{\mathbb{R}}
\def\PP{\mathbb{P}}
\def\FF{\mathcal{F}}
\def\NN{\mathcal{N}}
\def\UU{\mathcal{U}}
\def\PP{\mathcal{P}}
\def\HH{\mathcal{H}}
\def\CC{\mathcal{C}}
\def\LL{\mathcal{L}}
\def\KK{\mathcal{K}}
\def\DD{\mathcal{D}}
\def\dx1{\dfrac{dx_1}{x_1}}
\def\dy1{\dfrac{dy_1}{y_1}}
\title{Combinatorial Aspects of Classical Resolution of Singularities}
\author{Beatriz Molina-Samper}
\date{}            
\begin{document}                     
\maketitle  
\begin{center}
	\emph{To Professor Felipe Cano on the occasion of his 60th birthday}
\end{center}
\begin{abstract}
	We describe combinatorial aspects of classical resolution of singularities that are free of characteristic and can be applied to singular foliations and vector fields as well as to functions and varieties. In particular, we give a combinatorial version of Hironaka's maximal contact theory in terms of characteristic polyhedra systems and we show the global existence of maximal contact in this context.
		
	\end{abstract}
\section{Introduction}       
	
	We present here a combinatorial formulation for the procedure of reduction of singularities in terms of polyhedra systems. This combinatorial structure is free of restrictions on the characteristic and provides a combinatorial support for the reduction of singularities of varieties, foliations, vector fields and differential forms, among other possible objects.
	
	Hironaka's characteristic polyhedra represent the combinatorial steps in almost any procedure of reduction of singularities. This is implicit in the formulation of the polyhedra game \cite{Hir}, solved by Spivakovsky \cite{Spi}, and in many other papers about characteristic polyhedra \cite{Cos2,Cos3,Sch,Spi,You}. 
	
	The combinatorial features concerning the problems of reduction of singularities are reflected in polyhedra systems without loosing the global aspects. In particular, we need to project the problem over a "Maximal Contact Support Fabric", that plays the role of a maximal contact variety \cite{Aro-H-V}. The proof of global existence of combinatorial maximal contact is also a problem of the same nature as the reduction of singularities. We present here a proof and it is solved thanks to the induction hypothesis on the dimension. 
	
	Let us remark that in positive characteristic maximal contact does not necessarily exist \cite{Cos,Cos4,Cos-J-S,Nar}. Nevertheless we do find it in the combinatorial situations.
	
	The polyhedra systems have evident links with toric varieties and they also provide a combinatorial global support for the so-called Newton non-degenerated varieties \cite{Oka}. In a forthcoming paper we plan to develop these last two aspects also for codimension one singular foliations.          
          
\section{Support Fabrics for Polyhedra Systems}       
The support fabrics play the role of ambient spaces for supporting polyhedra systems. They express the stratified structure of the space.

Let $I$ be a non-empty finite set and denote by $\PP(I)$ the set of subsets of $I$. We consider the \emph{Zariski topology on $\PP(I)$} whose closed sets are the sets $\KK \subset \PP(I)$ having the property: 
$$\text{ If } J_1 \in \KK \text{ and } J_2 \supset J_1 \Rightarrow J_2 \in \KK.$$     
A subset $\HH \subset \PP(I)$ is open if and only if for every $J \in \HH$ we have  $\PP(J) \subset \HH$. The closure of  $\{J\} \subset \PP(I)$ is the set $\overline{\{J\}}=\{J' \subset I ; \; J' \supset J\}$. 
      
A \emph{support fabric} is a pair  $\FF=(I,\HH)$ where $I$ is a finite set and $\HH$ is an open set for the Zariski topology on $\PP(I)$. The elements of $\HH$ are called \emph{strata} and $(I,\leq)$ is called the \emph{index set}. The \emph{dimension} $\text{dim} (\FF)$ is defined by $\text{dim} (\FF)=\max \{\# J; \; J \in \HH\},$ where $\#J$ denotes the number of elements of $J$. Note that the strata $J \in \HH$ where the dimension is reached are closed points in $\HH$.

The first example is the \emph{local support fabric} $\LL_I$ associated to the index set $I$, defined by $\LL_I=(I,\PP(I))$. We can obtain other ones as follows: 
\begin{itemize}
	\item The \emph{restriction $\FF|_{\UU}$} of the support fabric $\FF$ to an open set $\UU \subset \HH$. It is given by $\FF|_{\UU}=(I,\UU)$. Note that we always have $\FF=\LL_I|_{\HH}$.
	\item The \emph{reduction} $\text{Red}_{\KK}(\FF)$ of the support fabric $\FF$ to a closed subset $\KK$ of $\HH$, that is $\KK$ is the intersection with $\HH$ of a closed set of $\PP(I)$. We consider the smallest open set $\HH_{\KK}$ that contains $\KK$. That is $\HH_{\KK}=\bigcup_{J \in \KK}\PP(J)$.
	Then, we define $\text{Red}_{\KK}(\FF)=\FF|_{\HH_{\KK}}$.
	\item \label{ex:proyeccion} The \emph{projection} $\FF^T$ from a non-empty stratum $T \in \HH$. The set $$\HH^T=\{J \setminus T; \; T \subset J,\, J \in \HH\}$$ 
	is open in $\PP(I^T)$, where $I^T=I \setminus T$. We define $\FF^T=(I^T,\HH^T)$. Note that $\text{dim}(\FF^T) < \text{dim}(\FF)$.
\end{itemize}

Given a support fabric $\FF=(I, \HH)$, the \emph{relevant index set $\mathcal{I}_{\FF}$ of $\FF$} is defined by $\mathcal{I}_{\FF}= \cup_{J \in \HH}J$. We say that two support fabrics $\FF_1=(I_1,\HH_1)$ and $\FF_2=(I_2,\HH_2)$ are \emph{equivalent} if there is a bijection $\phi:\mathcal{I}_{\FF_1} \rightarrow \mathcal{I}_{\FF_2}$ such that $\HH_2= \{\phi(J); \, J \in \HH_1\}$. In this case we say that $\phi$ is an \emph{equivalence between $\FF_1$ and $\FF_2$}.

Let us define the \emph{blow-up $\pi_J(\FF)$ of $\FF$ centered in a non-empty stratum $J \in \HH$}. Take $I'=I \cup \{\infty\}$ where $\infty \notin I$. Define $\HH'=\HH_s' \cup \HH_{\infty}'$ where $\HH_s'=\HH\setminus \overline{\{J\}}$ and                  
$$\HH'_{\infty}= \bigcup\limits_{K\in \overline{\{J\}}\cap \HH} \HH'^{K}_{\infty}; \text{ where } \HH^{'K}_{\infty}=\{(K\setminus J) \cup A \cup \{\infty\}; \; A \subsetneq J\}.$$
We have that $\HH'$ is open in $\PP(I')$ and we define $\pi_J(\FF)=(I',\HH')$.
Note that for every $K\in \overline{\{J\}}\cap \HH$, there is a bijection $ \HH^{'K}_{\infty}\rightarrow \PP(J) \setminus \{J\}$ given by $J' \mapsto A_{J'}$, where $A_{J'}$ is the unique subset of $J$ such that $$J'=(K\setminus J) \cup A_{J'} \cup \{\infty\}.$$                

\begin{rem}        
	The dimension of a support fabric $\FF$ is invariant by blow-ups.
\end{rem}
\begin{rem}
	The relevant index sets after the blow-up $\pi_J(\FF)$ are given by
	$$ \mathcal{I}_{\pi_J(\FF)}=\left\{
	\begin{array}{ccc} 
	\mathcal{I}_{\FF} \cup \{\infty\}, & \text{if} & \#J \geq 2; \\
	\mathcal{I}_{\FF} \setminus\{j\} \cup \{\infty\}, & \text{if} & J=\{j\}. 
	\end{array}
	\right.$$
Moreover, if there is an equivalence $\phi$ between two support fabrics $\FF_1$ and $\FF_2$, then for every $J \in \HH_1$, the blow-ups $\pi_J(\FF_1)$ and $\pi_{\phi(J)}(\FF_2)$ are also equivalent.
\end{rem}
            
There is a surjective map $\pi_J^\#:\HH' \rightarrow \HH$ between the strata sets of $\pi_J(\FF)$ and $\FF$ respectively, given by  $\pi_J^{\#}(\HH'^{K}_{\infty})=\{K\}$, for each $K \in  \overline{\{J\}}\cap \HH$ and by $\pi_J^{\#}(J')=J'$, if $J' \in \HH'_s=\HH\setminus \overline{\{J\}}$. 
\begin{prop}       
	The map $\pi_J^{\#}:\HH' \rightarrow \HH$ is continuous.
	\begin{proof}     
		Remark that for every $J' \in \HH'$, we have $\pi_J^{\#}\Big(\PP(J')\Big) \subset \PP\left(\pi_J^{\#}(J')\right)$.   
	\end{proof}       
\end{prop}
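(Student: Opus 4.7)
The plan is to unwind the definition of the Zariski topology and show directly that the preimage of an open set is open. Recall that a subset of $\PP(I)$ is open precisely when it is closed under taking subsets. So, fixing an open set $\UU \subset \HH$ and a stratum $J' \in (\pi_J^{\#})^{-1}(\UU)$, it suffices to check that every $J'' \in \PP(J') \cap \HH'$ also maps into $\UU$. Combined with the pointwise inclusion $\pi_J^{\#}(\PP(J')) \subset \PP(\pi_J^{\#}(J'))$ suggested as the key fact, this is immediate: $\pi_J^{\#}(J'')$ is a subset of $\pi_J^{\#}(J') \in \UU$, and $\UU$ is closed under taking subsets, so $\pi_J^{\#}(J'') \in \UU$.

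The content of the proof is therefore the verification of the inclusion $\pi_J^{\#}(\PP(J')) \subset \PP(\pi_J^{\#}(J'))$. My approach would be a short case analysis on whether $J'$ lies in $\HH'_s$ or $\HH'_\infty$, and similarly for $J''$. If $J' \in \HH'_s = \HH \setminus \overline{\{J\}}$, then $\pi_J^{\#}(J') = J'$, and any $J'' \in \PP(J') \cap \HH'$ must lie in $\HH'_s$ (since $\infty \notin J'$), so $\pi_J^{\#}(J'') = J'' \subset J' = \pi_J^{\#}(J')$. If $J' \in \HH'_\infty$, write $J' = (K \setminus J) \cup A \cup \{\infty\}$ with $K \in \overline{\{J\}} \cap \HH$ and $A \subsetneq J$, so $\pi_J^{\#}(J') = K$. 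A subset $J'' \in \HH'_s$ cannot contain $\infty$, so $J'' \subset (K \setminus J) \cup A \subset K$ and we are done. A subset $J'' \in \HH'_\infty$ has the form $(K' \setminus J) \cup A' \cup \{\infty\}$; the inclusion $J'' \subset J'$ forces $K' \setminus J \subset K \setminus J$, and since $K' \in \overline{\{J\}}$ means $J \subset K'$, we can rewrite $K' = (K' \setminus J) \cup J \subset (K \setminus J) \cup J = K$, giving $\pi_J^{\#}(J'') = K' \subset K = \pi_J^{\#}(J')$.

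The only place where something could go wrong is the last subcase, where one has to remember that every element $K'$ of $\overline{\{J\}} \cap \HH$ automatically contains $J$. That fact is what promotes the inclusion $K' \setminus J \subset K \setminus J$ on the complement of $J$ to the full inclusion $K' \subset K$, and it is the crux of the hint. Once this is in hand, the proof is a two-line argument about closure under subsets, exactly as above.
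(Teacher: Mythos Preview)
Your proof is correct and follows exactly the paper's approach: both reduce continuity to the inclusion $\pi_J^{\#}(\PP(J')) \subset \PP(\pi_J^{\#}(J'))$, which the paper simply states while you supply the (correct) case analysis verifying it.
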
        
\begin{cor} \label{cor:continuidad}
	Given an open set $\UU \subset \HH$, we have $\pi_J(\FF|_{\UU})=\pi_J(\FF)|_{\UU'}$, where $\UU'=(\pi_J^{\#})^{-1}(\UU)$.
 
\end{cor}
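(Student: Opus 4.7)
The plan is to unpack both sides of the identity as pairs $(I',\cdot)$ with $I'=I\cup\{\infty\}$ and to verify that their strata sets coincide. Implicit in the statement is that $J\in\UU$ (otherwise $\pi_J(\FF|_\UU)$ is undefined), so I shall work under this assumption.

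First I would write down $\pi_J(\FF)|_{\UU'}$ explicitly. By definition its strata set is $\UU'=(\pi_J^{\#})^{-1}(\UU)$. Splitting $\HH'=\HH'_s\sqcup\HH'_\infty$ and using the definition of $\pi_J^{\#}$ on each piece, I get
$$
\UU'=\bigl(\UU\setminus\overline{\{J\}}\bigr)\,\cup\,\bigcup_{K\in\overline{\{J\}}\cap\UU}\HH'^{K}_{\infty},
$$
where I use that for $J'\in\HH'_s$ one has $J'\in\UU'\Leftrightarrow J'\in\UU$, while for $J'\in\HH'^{K}_{\infty}$ one has $J'\in\UU'\Leftrightarrow K\in\UU$.

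Next I would write down $\pi_J(\FF|_\UU)$. Its index set is again $I'=I\cup\{\infty\}$, and its strata set decomposes as $\UU''_s\cup\UU''_\infty$ relative to the ambient space $\UU$ of $\FF|_\UU$. The key observation here is that the closure of $\{J\}$ in the subspace $\UU\subset\PP(I)$ is exactly $\overline{\{J\}}\cap\UU$, since $\UU$ is open. Therefore $\UU''_s=\UU\setminus(\overline{\{J\}}\cap\UU)=\UU\setminus\overline{\{J\}}$, and
$$
\UU''_\infty=\bigcup_{K\in\overline{\{J\}}\cap\UU}\bigl\{(K\setminus J)\cup A\cup\{\infty\};\ A\subsetneq J\bigr\}=\bigcup_{K\in\overline{\{J\}}\cap\UU}\HH'^{K}_{\infty}.
$$
Comparing with the formula for $\UU'$ above yields $\UU'=\UU''_s\cup\UU''_\infty$, which is the desired equality.

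I do not expect any serious obstacle: the argument is a direct unfolding of definitions. The only point that requires a small amount of care is the interplay between the closure operator and the subspace topology on $\UU$; once one checks that $\overline{\{J\}}$ in $\UU$ coincides with $\overline{\{J\}}\cap\UU$ (which is immediate since $\UU$ is open for the Zariski topology), the two descriptions of the strata match piece by piece.
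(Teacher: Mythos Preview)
Your proof is correct. The paper gives no explicit argument for this corollary, treating it as an immediate consequence of the continuity of $\pi_J^{\#}$ (which guarantees that $\UU'$ is open so that the restriction makes sense); your direct computation of the strata sets on both sides is exactly the verification one needs and matches the intended reasoning. One minor remark: since in the paper $\overline{\{J\}}$ always denotes the closure in $\PP(I)$ (and the blow-up definition uses $\overline{\{J\}}\cap\HH$), the subspace-topology consideration you flag is in fact a non-issue---the set $\overline{\{J\}}$ is the same in either reading---so that part of your argument could be streamlined.
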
         
 
\begin{ejem} \label{ex:fabric}
Consider a pair $(M,E)$, where $M$ is a complex analytic variety and $E$ is a \emph{strong normal crossings divisor} of $M$. That is, $E$ is the union of a finite family $\{E_i\}_{i \in I}$ of irreducible smooth hypersurfaces $E_i$, where we fix an order in the index set $I$ and the next properties hold: 
\begin{enumerate}
\item Given a point $p \in M$ and the subset $I_p=\{i \in I; \; p \in E_i\}$, there is part of a local coordinate system in $p$ of the form $\{x_i\}_{i \in I_p}$ in such a way that $E_i=(x_i=0)$ locally in $p$ for each $i \in I_p$ (such coordinate systems are called \emph{adapted to $E$}). 
\item The non-singular closed analytic set $E_J=\bigcap_{j\in J} E_j$ is connected for each $J \subset I$.
\end{enumerate}
The pair $\FF_{M,E}=(I,\HH)$ is a support fabric, where $\HH=\{J \subset I; \; E_{J} \neq \emptyset\}$. Note that the dimension of $\FF_{M,E}$ is not necessarily the dimension of $M$. Given $J \in \HH$, we can perform the usual blow-up 
$$\pi:(M',E') \rightarrow (M,E)$$
centered in $E_J$, where $E'$ is the total transform of $E$, that is, $E'=\pi^{-1}(E)$. In this situation we have $\pi_J(\FF)=\FF_{M',E'}$.
\end{ejem}  
           
\section{Polyhedra Systems}
When we perform a blow-up of the ambient space, it becomes of global nature. Polyhedra systems provide a way of giving Newton or characteristic polyhedra in a coherent way along global ambient spaces, represented in our case by the support fabric.
         
\subsection{Definitions.}          
Given a totally ordered finite set $J$, we recall that $\RR^J$ denotes the set of maps $\sigma:J \rightarrow \RR$. If $J_1 \subset J_2$, there is a canonical projection $\mbox{\rm pr}_{J_2,J_1}:\RR^{J_2} \rightarrow \RR^{J_1}$ given by $\sigma \mapsto \sigma_{|_{J_1}}$.
For a subset $A \subset \RR_{\geq 0}^J$, we define the \emph{positive convex hull} $[[A]]$ by
$$[[A]]=\mbox{Convex hull}\Big(A+\RR_{\geq 0}^J\Big) \subset \RR^J_{\geq 0}.$$
Let $d \in \ZZ_{>0}$ be a positive integer. We say that a subset $\Delta \subset \RR^J_{\geq 0}$ is a \emph{characteristic polyhedron with denominator $d$} if there is $A \subset (1/d)\ZZ_{\geq 0}^J$ such that $\Delta=[[A]]$.

\begin{defin}  
A \emph{polyhedra system $\DD$ over a support fabric $\FF=(I,\HH)$ with denominator $d$}, is a triple $\DD= (\FF ; \{\Delta_J\}_{J\in \HH},d)$, where each $\Delta_J \subset \RR^J_{\geq 0}$ is a characteristic polyhedron with denominator $d$, in such a way that for every $J_1,J_2 \in \HH$ where $J_1 \subset J_2$, we have $\Delta_{J_1}=pr_{J_2,J_1}(\Delta_{J_2})$. When $d=1$, we say that $\DD$ is a \emph{Newton polyhedra system} over $\FF$.
If it is necessary, we denote by $\FF_\DD$ the support fabric of $\DD$. We define the \emph{dimension} of $\DD$ as the dimension of $\FF_{\DD}$.
\end{defin}
We say that two polyhedra systems $\DD_1$ and $\DD_2$ are \emph{equivalent} if there is an equivalence $\phi$ between $\FF_1$ and $\FF_2$ such that $\Delta^2_{\phi(J)}=\Delta^1_J$ for all $J\in \HH_1$, where $\DD_1=(\FF_1, \{\Delta^{1}_J\}_{J \in \HH_1},d)$ and $\DD_2=(\FF_2, \{\Delta^2_J\}_{J \in \HH_2},d)$. We also say that $\phi$ is an \emph{equivalence between $\DD_1$ and $\DD_2$}.
 
Given a polyhedra system $\DD=(\FF, \{\Delta_J\}_{J \in \HH},d)$, there is a \emph{Newton polyhedra system $\NN(\DD)$ associated to $\DD$}, defined by
$$ \NN(\DD)=(\FF; \{d\Delta_J\}_{J\in \HH},1).$$
Conversely, given another positive integer number $d'$, we can obtain a new polyhedra system $\DD /d'$ given by
$ \DD/d' =\left(\FF; \left\{(1/d')\Delta_J\right\}_{J\in \HH},dd'\right).$ In particular, we have $\NN(\DD)/d =\DD$. 

Let us see some examples of polyhedra systems:
\begin{itemize}
	\item Given a characteristic polyhedron $\Delta \subset \RR^I_{\geq 0}$ with denominator $d$, we define the \emph{local polyhedra system $\LL (\Delta,d)$} by $$\LL (\Delta,d)=(\LL_I; \{\Delta_J\}_{J\in \PP(I)},d), \text{ where } \Delta_J=pr_{IJ}(\Delta).$$
	\item The \emph{restriction $\DD|_{\UU}$} of $\DD$ to an open set $\UU \subset \HH$ is defined by 
	$$\DD|_{\UU}=(\FF|_{\UU}; \{\Delta_J\}_{J\in \UU},d).$$
	\item The \emph{reduction $\text{Red}_{\KK}(\DD)$} of $\DD$ to a closed set $\KK$ of $\HH$ is  $\mbox{\rm Red}_{\KK}(\DD)=\DD|_{\HH_{\KK}}$.
	\item The \emph{fitting} $\widetilde{\DD}$ of $\DD$ is given as follows. For a subset $A \subset \RR^I_{ \geq 0}$, we consider the \emph{fitting vector $w_A \in \RR_{\geq 0}^I$} defined by $w_A(j)=\min\{\sigma(j); \; \sigma \in A\} \text{ for each } j \in I$. 
	We write $\widetilde{A}=A-w_A$ and we define $\widetilde{\DD}=(\FF;\{\widetilde{\Delta}_J\}_{J \in \HH}, d)$.

\end{itemize}	
Let us consider a polyhedra system $\DD=(\FF;\{\Delta_J\}_{J \in \HH}, d)$ and a stratum $T \in \HH$. We introduce now a new polyhedra system $\DD^T$, using \emph{Hironaka's projection of $\DD$ from $T$}, that plays an important role in Section \ref{sec:MCF}. 

Let $\FF^T=(I^T,\HH^T)$ be the support fabric obtained by projection of $\FF$ from $T$. Given $J^* \in \HH^T$, let us take the stratum $J =J^* \cup T \in \HH$ and let us consider the subset $M_J^T \subset \RR^J$ given by $M_J^T=\{ \sigma \in \RR^J; \; \sum_{j \in T}\sigma(j) < 1\}$. \emph{Hironaka's projection} $\nabla_J^T: M_J^T \rightarrow \RR_{\geq 0}^{J^*}$ is defined by
$$\nabla_J^T(\sigma)= \dfrac{1}{1-\sum\limits_{j \in T}\sigma(j)}\sigma_{|J\setminus T}.$$
for every $\sigma \in M_J^T$.
We define $\Delta_{J^*}^T$ by $\Delta_{J^*}^T=\nabla_J^T(\Delta_J \cap M_J^T) \subset \RR^{J^*}_{\geq 0}$.
The reader can verify that
\begin{equation} \label{eq:proyeccion de Hironaka}
\DD^T=(\FF^T; \{\Delta_{J^*}^T\}_{J^* \in \HH^T},d!d),
\end{equation}
is a polyhedra system in lower dimension. It is important to remark that the denominator of $\DD^T$ is $d!d$ instead of $d$.

Note that $\Delta_{J^*}^T=\emptyset$ if $\Delta_J = \emptyset$ or $\sum_{j \in T}\sigma(j) \geq 1$ for all $\sigma \in \Delta_J$.  Note also that in a polyhedra system either all the polyhedra are the empty set or none of them is empty.
From now, we suppose (unless otherwise stated) that all the polyhedra $\Delta$ we are working with, are such that $\Delta \neq \emptyset$. 

\subsection{Singular Locus of Polyhedra Systems}
Let $\Delta \subset \RR^J_{\geq 0}$ be a characteristic polyhedron with $J \neq \emptyset$. The \emph{contact exponent $\delta(\Delta)$} is defined by 
$$\delta(\Delta)=\min \left\{|\sigma|;\; \sigma \in \Delta\right\}, \text{ where } |\sigma|= \sum\limits_{j \in J}\sigma(j).$$
When $J = \emptyset$, there is only one possible polyhedron $\Delta=\{\bullet \} \subset \RR^{\emptyset}_{\geq 0}$. By convention, we assume $\delta(\{\bullet \})=-1$.
\begin{rem}
If $\Delta=[[A]]$ with $A \subset \ZZ_{\geq 0}$ (Newton polyhedron), the contact exponent corresponds to the classical idea of multiplicity.
\end{rem}
Now, let $\DD=(\FF; \{\Delta_J\}_{J\in \HH},d)$ be a polyhedra system. The \emph{contact exponent $\delta(\DD)$} is defined by $\delta(\DD)=\max\{\delta(\Delta_J); \; J \in \HH\}$. The \emph{singular locus} $\text{Sing}(\DD)$ is the subset 
$$\text{Sing}(\DD)=\{J \in \HH; \; \delta(\Delta_J) \geq 1\} \subset \HH.$$ 
We have $\text{Sing}(\DD)$ is a closed set, since the contact exponent gives an upper semicontinuous function in $\HH$. We say that $\DD$ is \emph{singular} if $\text{Sing}(\DD)\neq \emptyset$, otherwise it is \emph{non-singular}.

\subsection{Transforms of Polyhedra Systems under Blow-ups}

Let us consider the blow-up $\pi_J(\FF)=(I'=I \cup \{\infty\},\HH'=\HH'_s \cup \HH'_{\infty})$ of a support fabric $\FF=(I,\HH)$ centered in a non-empty stratum $J\in \HH$. Given $J' \in \HH'_{\infty}$ and $K=\pi_J^{\#}(J')$, we define $\lambda_{J'}:\RR^K \rightarrow \RR^{J'}$ by  
$$\lambda_{J'}(\sigma)(j)=\sigma(j),  \; j \neq \infty; \quad \lambda_{J'}(\sigma)(\infty)=|\sigma_{|J}|=\sum\limits_{j\in J }\sigma(j).$$
Let us denote by $\{e_{I,i}\}_{i \in I}$ the standard basis of $\RR^I$, that is $e_{I,i}(i')=\delta_{i,i'}$ (Kronecker). If $J \subset I$ and $i \in I$ we define $e_{J,i} \in \RR^J$ by $e_{J,i}=\text{pr}_{I,J}(e_{I,i})$.

\begin{defin}
The \emph{total transform $\Lambda^0_J(\DD)$ of a polyhedra system $\DD=(\FF;\{\Delta_J\}_{J\in \HH},d)$ centered in a stratum $J \in \HH$} is the polyhedra system $\Lambda^0_J(\DD)=(\pi_J(\FF);\{\Delta^0_{J'}\}_{J'\in \HH'},d)$, where
	 $$\Delta^0_{J'}=\Delta_{J'}, \;J' \in \HH'_s; \quad \Delta^0_{J'}=\left[\left[ \lambda_{J'} (\Delta_K)\right]\right], \; J'\in \HH_{\infty}', \, K=\pi_J^{\#}(J').$$
If $J\in Sing(\DD)$, the \emph{characteristic transform $\Lambda_J(\DD)$} of $\DD$ \emph{centered in $J$} is defined by the polyhedra system $\Lambda_J(\DD)=(\pi_J(\FF);\{\Delta'_{J'}\}_{J'\in \HH'},d)$, where $\Delta'_{J'}=\Delta^0_{J'}-e_{J',\infty}$ for each $J' \in \HH'$.
\end{defin}
The characteristic transform $\Lambda_J(\DD)$ is a polyhedra system again, because the center of the blow-up is singular and then $\delta(\Delta^0_{\{\infty\}}) \geq 1$.

Let $\NN=(\FF; \{N_J\}_{J \in \HH},1)$ be a Newton polyhedra system and take an integer number $d \geq 1$. Consider the polyhedra system $\DD=\NN /d$ and a singular stratum  $J\in \text{Sing}(\DD)$. The \emph{d-moderated transform $\Theta^d_J(\NN)$ of $\NN$ centered in $J$} is defined by
$\Theta^d_J(\NN)=\NN(\Lambda_J(\DD))$.
Note that $J\in \text{Sing}(\DD)$ if and only if $\delta(N_J)\geq d$. 

\begin{rem}
	If there is an equivalence $\phi$ between  two polyhedra systems $\DD_1$ and $\DD_2$, then for all $J \in \text{Sing}(\DD_1)$, the characteristic transforms $\Lambda_J(\DD_1)$ and $\Lambda_{\phi(J)}(\DD_2)$ are also equivalent.	
\end{rem}

\section{Reduction of Singularities.}
\subsection {Statements}

We say that a Newton polyhedra system $\NN=(\FF;\{N_J\}_{J\in\HH},1)$ has \emph{normal crossings} if the polyhedron $N_J$ has a single vertex for each $J\in \HH$.

The property of having normal crossings is stable under blow-ups. A first objective of reduction of singularities is to get this property.

\begin{teo}[Combinatorial Reduction to Normal Crossings]\label{teo:NC} Given a Newton polyhedra system $\NN$, there is a finite sequence of total transforms 
$$\NN \rightarrow \NN_1 \rightarrow \NN_2 \rightarrow \cdots \rightarrow \NN_k$$
such that $\NN_k$ has normal crossings.
\end{teo}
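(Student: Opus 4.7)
The plan is to argue by induction on a combinatorial invariant that measures how far the Newton polyhedra system $\NN=(\FF;\{N_J\}_{J\in\HH},1)$ is from having normal crossings, showing that a well-chosen total transform strictly decreases this invariant.

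The first observation is that normal crossings on the strata $J\in\HH$ that are maximal for inclusion propagates to all strata, since the single-vertex polyhedron $v+\RR_{\geq 0}^{J_2}$ projects under $\mathrm{pr}_{J_2,J_1}$ to the single-vertex polyhedron $\mathrm{pr}_{J_2,J_1}(v)+\RR_{\geq 0}^{J_1}$. Writing $\HH_{\max}$ for the set of maximal strata, $v(N)$ for the number of vertices of a Newton polyhedron $N$, and $m(\NN)=\max_{J\in\HH_{\max}}v(N_J)$, the natural candidate for the invariant is the lexicographic pair
\[
\nu(\NN)=\bigl(m(\NN),\ \#\{J\in\HH_{\max}:v(N_J)=m(\NN)\}\bigr),
\]
which attains its minimal value $(1,\#\HH_{\max})$ exactly when $\NN$ has normal crossings.

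Assuming $m(\NN)\ge 2$, pick a maximal stratum $J_0$ with $v(N_{J_0})=m(\NN)$. The first strategy to try is to take $J_0$ itself as the centre of the total transform $\Lambda^{0}_{J_0}(\NN)$. The strata in $\HH\setminus\overline{\{J_0\}}$ are then left unchanged, while the new exceptional strata are the $J'=A\cup\{\infty\}\in\HH^{'J_0}_{\infty}$ with $A\subsetneq J_0$, carrying polyhedra $[[\lambda_{J'}(N_{J_0})]]$. Since $\lambda_{J'}(\sigma)(\infty)=\sum_{j\in J_0}\sigma(j)$, at the stratum $A=\emptyset$ the polyhedron collapses to $[\delta(N_{J_0}),\infty)\subset\RR_{\geq 0}^{\{\infty\}}$, a single vertex; on the maximal exceptional strata $J'=(J_0\setminus\{j\})\cup\{\infty\}$, two vertices $u,w$ of $N_{J_0}$ coalesce exactly when $u|_{J_0\setminus\{j\}}\leq w|_{J_0\setminus\{j\}}$ and $|u|\leq|w|$. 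Corollary~\ref{cor:continuidad} ensures that this stratum-by-stratum analysis is compatible with restriction to open subsets of $\HH$.

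The main obstacle is that a single blow-up at $J_0$ need not decrease the maximum vertex count on every new maximal stratum: even in the two-dimensional case one of the new charts may inherit all vertices of $N_{J_0}$. Hence the correct well-ordered invariant and centre-selection rule must be more refined, and this is precisely the combinatorial content of Hironaka's polyhedra game as solved by Spivakovsky \cite{Spi}; in the present setting the polyhedra-on-each-stratum structure, together with compatibility under the coordinate projections $\mathrm{pr}_{J_2,J_1}$, allows one to play this game stratum by stratum while keeping global bookkeeping. An alternative route is induction on $\dim(\FF)$: the case $\dim(\FF)=0$ is immediate, and for $\dim(\FF)>0$ Hironaka's projection $\DD^T$ (see \eqref{eq:proyeccion de Hironaka}) produces a lower-dimensional polyhedra system whose normal-crossings reduction should lift to a sequence of total transforms of $\NN$. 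Either route reduces the theorem to the classical polyhedra game, which is its combinatorial core.
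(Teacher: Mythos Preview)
Your proposal is not a proof: you set up the invariant $\nu(\NN)$, correctly observe that it need not drop under a single total transform, and then stop, asserting only that ``the correct well-ordered invariant and centre-selection rule must be more refined'' and that ``either route reduces the theorem to the classical polyhedra game''. Neither reduction is actually carried out. Spivakovsky's game in \cite{Spi} concerns a \emph{single} polyhedron under characteristic-type moves; you do not explain how a winning strategy there produces a global sequence of \emph{total} transforms on a polyhedra \emph{system}, compatible with all the projections $\mathrm{pr}_{J_2,J_1}$ between strata. Your alternative via Hironaka's projection $\DD^T$ is likewise only a hope (``should lift''); note also that $\DD^T$ has denominator $d!d\neq 1$, so it is not a Newton system and the inductive statement you would need does not match the one you are trying to prove.

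For comparison, the paper does not attack Theorem~\ref{teo:NC} directly at all. It first proves the stronger Theorem~\ref{teo:ch} (reduction of singularities via \emph{characteristic} transforms) through the full maximal-contact machinery of Sections~\ref{sec:HQ-O}--\ref{sec:GPS}, deduces Corollary~\ref{cor: teo2 implica teo1}, and then takes $d=1$: for a Newton system, $\delta(\NN_k)<1$ forces each $N_{k,J}$ to contain the origin and hence to equal $\RR_{\geq 0}^J$, a single vertex; since a $1$-moderated transform and a total transform at the same centre differ only by a translation along $e_{J',\infty}$, the same sequence of centres, read as total transforms, also ends in a normal-crossings system. In other words, the content you defer to is precisely what the rest of the paper supplies.
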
  
We can find proofs of this result in another contexts in \cite{Fer} and \cite{Gow}. Anyway, we provide a complete proof next.
\begin{teo} [Combinatorial Reduction of Singularities]\label{teo:ch}
Given a polyhedra system $\DD$, there is a finite sequence of characteristic transforms 
$$\DD \rightarrow \DD_1 \rightarrow \DD_2 \rightarrow \cdots \rightarrow \DD_k$$
such that $Sing(\DD_k)=\emptyset$. We call such a sequence a \emph{reduction of singularities of $\DD$}.
\end{teo}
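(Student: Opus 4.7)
The plan is induction on $n = \dim(\FF_\DD)$, using combinatorial maximal contact (proved in Section \ref{sec:MCF}) to reduce the problem to a strictly lower-dimensional support fabric. The base case $n = 0$ is immediate: $\HH \subset \{\emptyset\}$, the only possible polyhedron is $\{\bullet\}$ with contact exponent $-1 < 1$, so $\text{Sing}(\DD) = \emptyset$ and there is nothing to do.

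For the inductive step, consider a maximal closed stratum $J_0$ of $\text{Sing}(\DD)$. By the global existence theorem for combinatorial maximal contact (possibly after a preliminary finite sequence of characteristic transforms used to stabilize the combinatorial shape of $\DD$ around $J_0$), one finds a sub-stratum $T$ of $J_0$ that is of maximal contact for $\DD$. Hironaka's projection $\DD^T$ then lives on the fabric $\FF^T$ with $\dim(\FF^T) < \dim(\FF)$ and faithfully records the singularities of $\DD$ sitting above $T$. By the induction hypothesis applied to $\DD^T$, there is a finite sequence of characteristic transforms
\[
\DD^T \to \DD^T_1 \to \cdots \to \DD^T_m
\]
with $\text{Sing}(\DD^T_m) = \emptyset$. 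I then lift this sequence to $\DD$ by blowing up $J^* \cup T$ in $\FF$ whenever the lower-dimensional step blows up $J^*$ in $\FF^T$; Corollary \ref{cor:continuidad} handles the local-to-global bookkeeping when several such cycles are carried out in parallel over the finitely many maximal closed singular strata.

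The heart of the argument, and the main obstacle, is the compatibility of Hironaka's projection with the characteristic transform: one must prove that $\Lambda_{J^* \cup T}(\DD)$ projects via $\nabla^T$ to $\Lambda_{J^*}(\DD^T)$, matching the denominator adjustment $d \mapsto d!d$ recorded in \eqref{eq:proyeccion de Hironaka}, and that $T$ (suitably transported along the lifted blow-ups) remains of maximal contact throughout the lifted sequence. This is precisely the point that fails geometrically in positive characteristic but, as emphasized in the introduction, does hold combinatorially. Once these two facts are in place, each lifted cycle strictly reduces the contact exponent $\delta(\DD)$; since $\delta(\DD_k)$ takes values in the discrete set $(1/d)\ZZ_{\geq 0}$ and is bounded above by $\delta(\DD)$, the process terminates after finitely many iterations with $\text{Sing}(\DD_k) = \emptyset$.
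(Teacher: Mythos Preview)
Your outline has a genuine structural gap: you apply maximal contact and Hironaka's projection $\DD^T$ to an arbitrary singular polyhedra system, but in the paper these tools are only defined and only work when $\DD$ is \emph{special}, i.e.\ $\delta(\DD)=1$. The strict tangent space $T(\Delta_J)$ is defined only for strata with $\delta(\Delta_J)=1$, and the bijection $\text{Sing}(\DD)\leftrightarrow\text{Sing}(\DD^T)$ (Step~1 of Proposition~\ref{prop: maximal contact}) as well as the commutation of $\nabla^T$ with characteristic transforms (Step~2) both rely on this normalization. For a general system with $\delta(\DD)>1$ there is no stratum $T$ with maximal contact in the sense of Section~\ref{sec:MCF}, and your claim that ``each lifted cycle strictly reduces $\delta(\DD)$'' has no support: nothing in the projection/lifting mechanism forces the contact exponent to drop below its current value once that value exceeds~$1$.

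The paper's architecture is therefore genuinely more elaborate than what you sketch. It splits singular systems into three classes: Hironaka quasi-ordinary systems are resolved directly by a lexicographic invariant (Proposition~\ref{prop:qo}), with no induction needed; special systems are handled, under CRS($n{-}1$), by first making the system \emph{consistent} (Proposition~\ref{prop: hacer consistente}) and then invoking maximal contact and the projection $\DD^T$ (Propositions~\ref{prop: maximal contact} and~\ref{prop:crs special}); and general systems are treated by Spivakovsky's invariant $\text{Spi}_\DD$ and Spivakovsky's projection $\DD^{sp}$, which is special, so that CRS$^{\text{sp}}(n)$ forces $\text{Spi}_\DD$ to drop until the system becomes quasi-ordinary (Proposition~\ref{prop: crs gen}). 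Your proposal essentially collapses the special and general cases into one and omits the Spivakovsky layer entirely; without it the induction does not close.
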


\begin{rem}
Theorem \ref{teo:ch} would be false if we had taken the condition $\delta(\Delta_J) > 1$, instead of $\delta(\Delta_J) \geq 1$, for the centers of blow-up.
For instance, consider the local polyhedra system $\LL(\Delta,1)$ where $\Delta=[[(1,1)]] \subset \RR^{\{1,2\}}_{\geq 0}$. The only possible center would be $\{1,2\}$ and the situation is repeated in each closed strata of the characteristic transform.
\end{rem}
\begin{cor} \label{cor: teo2 implica teo1}
Let $\NN$ be a Newton polyhedra system and $d$ a positive integer number. There is a finite sequence of $d$-moderated transforms
$$\NN \rightarrow \NN_1 \rightarrow \NN_2 \rightarrow \cdots \rightarrow \NN_k$$
such that $\delta(\NN_k) < d$.
\end{cor}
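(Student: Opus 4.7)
The plan is to reduce the corollary to Theorem \ref{teo:ch} by passing through the correspondence between Newton polyhedra systems and polyhedra systems with denominator $d$. Specifically, I would set $\DD = \NN/d = (\FF;\{(1/d)N_J\}_{J\in\HH},d)$ and apply the reduction of singularities theorem directly to $\DD$.

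First I would record the bookkeeping: by the definitions, $\NN(\DD) = \NN$, and for any polyhedra system $\DD' = (\FF';\{\Delta'_J\},d)$ with $J\in\text{Sing}(\DD')$ one has $\Theta^d_J(\NN(\DD')) = \NN(\Lambda_J(\NN(\DD')/d)) = \NN(\Lambda_J(\DD'))$ since $\NN(\DD')/d = \DD'$. Moreover, characteristic transforms preserve the denominator, so this identity propagates along any sequence of transforms of $\DD$.

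Next I would invoke Theorem \ref{teo:ch} on $\DD$ to obtain a sequence of characteristic transforms
\[
\DD \rightarrow \DD_1 \rightarrow \DD_2 \rightarrow \cdots \rightarrow \DD_k
\]
with $\text{Sing}(\DD_k) = \emptyset$, where each step is centered at a singular stratum $J_i$. Setting $\NN_i = \NN(\DD_i)$ with $\NN_0 = \NN$, the observation above shows that $\NN_{i+1} = \Theta^d_{J_i}(\NN_i)$, so the induced sequence
\[
\NN \rightarrow \NN_1 \rightarrow \NN_2 \rightarrow \cdots \rightarrow \NN_k
\]
is indeed a sequence of $d$-moderated transforms. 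One must check that each chosen center $J_i$ is admissible for a $d$-moderated transform of $\NN_i$, but this is immediate since $J_i \in \text{Sing}(\DD_i)$ is equivalent to $\delta((N_i)_{J_i}) \geq d$, which is exactly the admissibility condition recorded just after the definition of $\Theta^d_J$.

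Finally, the terminal condition translates: $\text{Sing}(\DD_k)=\emptyset$ means $\delta(\Delta^k_J) < 1$ for all $J\in\HH_k$, and rescaling by $d$ gives $\delta((N_k)_J) < d$ for all $J$, i.e.\ $\delta(\NN_k) < d$. There is essentially no obstacle here beyond checking that the passage $\DD_i \leftrightarrow \NN_i$ commutes with the two notions of transform; once that bookkeeping is in place the corollary is a direct restatement of Theorem \ref{teo:ch}.
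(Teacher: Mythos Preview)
Your proof is correct and is exactly the argument the paper intends: the corollary is stated without proof immediately after Theorem~\ref{teo:ch}, and the surrounding definitions ($\NN(\DD)/d=\DD$, $\Theta^d_J(\NN)=\NN(\Lambda_J(\NN/d))$, and the equivalence $J\in\text{Sing}(\DD)\Leftrightarrow\delta(N_J)\geq d$) are set up precisely so that applying Theorem~\ref{teo:ch} to $\DD=\NN/d$ yields the desired sequence of $d$-moderated transforms. There is nothing to add.
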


Taking $d=1$, the reader can see that Corollary \ref{cor: teo2 implica teo1} implies Theorem \ref{teo:NC}. 

The objective now is to provide a proof of Theorem \ref{teo:ch}. Although this result follows from general Hironaka's reduction of singularities, we give a complete combinatorial proof here, emphasizing the ideas of maximal contact developed by Hironaka, Aroca and Vicente in \cite{Aro-H-V} as well as the polyhedra control suggested by Spivakosvky in \cite{Spi}. 

\subsection{Induction Procedure}
The proof of Theorem \ref{teo:ch} runs essentially by induction on the dimension $\text{dim} (\FF_\DD)$ of $\DD$. More precisely, let us consider the following statement.
\begin{quote}  CRS($n$):
		\em If $dim(\FF_\DD)\leq n$, then $\DD$ has a reduction of singularities.
\end{quote}
The starting step of the induction is given by 
\begin{prop}
	CRS($1$) holds.
\begin{proof}
Choose the invariant $I(\DD)$ given by $I(\DD)=\sum_{J \in \HH}\delta(\Delta_J)$. We have $I(\DD) \geq -1$ and after a single characteristic transform  drops exactly a unit. Thus, in a certain point of the process, we can no more perform a transform. We are done, since then the singular locus is empty.
\end{proof}
\end{prop}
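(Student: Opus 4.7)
My plan is to exhibit a bounded-below invariant that drops by exactly one at each characteristic transform; this forces termination of any sequence of transforms, and termination is equivalent to the singular locus being empty. The natural candidate is $I(\DD) = \sum_{J \in \HH} \delta(\Delta_J)$. I will (a) bound it below, and (b) verify the drop under a single characteristic transform.

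First I would enumerate the strata. Since $\dim(\FF_\DD) \leq 1$, every $J \in \HH$ satisfies $\#J \leq 1$, so the only possible strata are $\emptyset$ (with $\delta(\Delta_\emptyset) = -1$ by convention) and singletons $\{i\}$. For a singleton, $\Delta_{\{i\}} \subset \RR^{\{i\}}_{\geq 0}$ is necessarily a half-line $[\alpha_i, \infty)$ with $\alpha_i \in (1/d)\ZZ_{\geq 0}$, and $\delta(\Delta_{\{i\}}) = \alpha_i \geq 0$. Since $\HH$ is finite and only the empty stratum can contribute a negative summand, we obtain $I(\DD) \geq -1$.

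Next I would analyze the characteristic transform centered at a singular stratum $\{i\}$, that is, one with $\alpha_i \geq 1$. The key observation in dimension one is that $\overline{\{\{i\}\}} \cap \HH = \{\{i\}\}$, because no strictly larger subset of $I$ lies in $\HH$. Therefore $\HH'$ agrees with $\HH$ except that $\{i\}$ is removed and a single new stratum $\{\infty\}$ is added. Computing $\lambda_{\{\infty\}}$ at the one-element source set yields $\Delta^0_{\{\infty\}} = [\alpha_i, \infty)$, so after subtracting $e_{\{\infty\},\infty}$ we find $\Delta'_{\{\infty\}} = [\alpha_i - 1, \infty)$ and $\delta(\Delta'_{\{\infty\}}) = \alpha_i - 1$. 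All other polyhedra are untouched, so summing gives $I(\Lambda_{\{i\}}(\DD)) = I(\DD) - 1$.

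Combining (a) and (b), any sequence of characteristic transforms strictly decreases $I$ by one while remaining bounded below by $-1$; hence after at most $I(\DD)+1$ steps we reach some $\DD_k$ on which no further characteristic transform is available, which by definition means $\mathrm{Sing}(\DD_k) = \emptyset$. I do not expect a genuine obstacle here: the only mildly delicate point is the explicit identification of $\HH'$ and of the new contact exponent, both of which are automatic because in dimension one there are no higher-dimensional strata through $\{i\}$ to interact with the blow-up.
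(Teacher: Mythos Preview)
Your proof is correct and follows precisely the same approach as the paper: you introduce the invariant $I(\DD)=\sum_{J\in\HH}\delta(\Delta_J)$, bound it below by $-1$, and show it drops by exactly one under each characteristic transform. The paper's proof is terser, but your explicit identification of $\HH'$ and of $\Delta'_{\{\infty\}}$ simply fills in the details that the paper leaves to the reader.
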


We distinguish three types of singular polyhedra systems $\DD$:
\begin{enumerate}
	\item We say that $\DD$ is \emph{Hironaka quasi-ordinary} if each polyhedron over a singular stratum has a single vertex, that is, $\Delta_J=[[\{\sigma_J\}]]$ for all $J \in \text{Sing}(\DD)$.
	\item We say that $\DD$ is \emph{special} if $\delta(\DD) = 1$.
	\item We say that $\DD$ is \emph{general} if $\delta(\DD) > 1$.
\end{enumerate}

The study of Hironaka quasi-ordinary systems does not require the induction hypothesis as we see in Section \ref{sec:HQ-O}.

We consider special systems under the induction hypothesis CRS($n-1$). In this case we reduce the problem to lower dimension projecting over a \emph{maximal contact support fabric}. The existence of maximal contact goes as follows:
\begin{enumerate}
	\item We solve the local problem by considering Hironaka's strict tangent space. 
	\item We eliminate the possible ``global incoherence'' by invoking the induction hypothesis.
\end{enumerate} 
Details are given in Section \ref{sec: special systems}.

Finally, for general systems, we define Spivakovsky's invariant, that ``measures'' how far is the system from the quasi-ordinary case. We control the behaviour of this invariant by means of a special system. Details are given in Section \ref{sec:GPS}.

\section{Examples}
We consider a pair $(M,E)$, where $M$ is a complex analytic variety and $E$ is a strong normal crossings divisor as in Example \ref{ex:fabric}. Recall that we have a support fabric $\FF_{M,E}=(I,\HH)$ associated to $(M,E)$. Remark that there is a stratification of $M$ with strata $S_J$ given by 
$$S_J=E_J \setminus \bigcup_{i \notin J} E_i,$$
where $E_J= \bigcap_{j \in J} E_j\neq \emptyset$, for each $J \in \HH$.	

The followings are examples of polyhedra systems that motivate our definitions:

\subsection{Hypersurfaces}

Let $H \subset M$ be a closed hypersurface of $(M, \mathcal{O}_M)$ given by an invertible coherent ideal sheaf $\mathcal{I} \subset \mathcal{O}_M$ and let us suppose that $E_i \not\subset H$ for all $i \in I$.
We say that $H$ is \emph{combinatorially regular for $(M,E)$} if we have the property $E_J \not\subset H$, for all $J \in \HH$.

We can attach a polyhedra system $\NN_{M,E; H}$ to $H$ as follows.
Given $J \in \HH$, we consider a point $p \in S_J$ and a generator $F$ of $\mathcal{I}_p$. Let us write $F$ in local adapted coordinates at $p$ as 
$$F=\sum a_{\sigma}(\underline{y})\underline{x}^{\sigma}; \text{ with } \sigma \in \ZZ_{\geq 0}^J.$$
where the coefficients $a_{\sigma}(\underline{y})$ are germs of functions in $p$ defined in an open set of $S_J$. We define $N_J$ by
$$N_J=\left[\left[\{\sigma \in \ZZ_{\geq 0}^J; \; a_{\sigma}(\underline{y}) \not\equiv 0\}\right]\right].$$
The polyhedron $N_J$ is independent of $p$, the generator $F$ and the chosen local adapted coordinates. The independence of the local adapted coordinates and of the generator is straightforward. Once fixed a local coordinate system, we can move to points $p'$ close to $p$ in the same stratum and we obtain the same polyhedron in the translated coordinates. Finally, thanks to the connectedness of the stratum, we can join two points by a compact path and repeat finitely many times the above procedure.

This construction is compatible with the projections $p_{J_2J_1}:\RR^{J_2} \rightarrow \RR^{J_1}$, then we obtain the Newton polyhedra system $\NN_{M,E;H}$. 

The following statements are equivalent:
\begin{enumerate}
	\item The hypersurface $H$ is combinatorially regular for $(M,E)$.
	\item The Newton polyhedra system is non-singular, that is $\delta(N_J)=0$ for all $J \in \HH$.
\end{enumerate}
Let $\nu_J(H)$ be the generic multiplicity of $H$ along $E_J$. Note that $\nu_J(H)=\delta(N_J)$. We say that $H$ is \emph{combinatorially equimultiple along $E_J$} if $\nu_J(H) =\nu_K(H)$ for every $K\in \HH$ such that $E_K \subset E_J$.
Now, let us consider $J \in \HH$ such that $m=\delta(N_J)$ is maximal. In particular, we have $H$ is combinatorially equimultiple along $E_J$. We can perform the usual blow-up $\pi:(M',E') \rightarrow (M,E)$ centered in $E_J$. The Newton polyhedra system attached to the strict transform $H'$ of $H$ is given by the $m$-moderated transform of $\NN_{M,E; H}$.
Then, applying repeatedly Corollary \ref{cor: teo2 implica teo1} we obtain a finite sequence of blow-ups centered in combinatorially equimultiple strata.
$$
(M,E;H)=(M^0,E^0;H^0) \leftarrow (M^1,E^1;H^1) \leftarrow \cdots \leftarrow (M^n,E^n;H^n)=(M',E';H')
$$
so that $H'$ is combinatorially regular for $(M',E')$.

The polyhedra system of this example is particularly useful when we consider Newton non-degenerate functions as in \cite{Kou,Oka}.

\subsection{Codimension one singular foliations}
Let us consider now a singular foliation $\LL$ of codimension one over $M$. We know that $\LL$ is given by an integrable and invertible coherent  $\mathcal{O}_M$-submodule of $\Omega^1_M[E]$, where $\Omega^1_M[E]$ denotes the sheaf of meromorphic one-forms having at most simple poles along $E$ (See \cite{Can,Can-C}).
Given a point $p \in S_J \subset M$, the stalk $\LL_p$ is generated by a meromorphic differential one-form $\omega \in \Omega^1_M[E]$, satisfying the Frobenius integrability condition $\omega \wedge d\omega=0$. Let us write $\omega$ in local adapted coordinates at $p$ as
$$\omega = \sum_{\sigma \in \ZZ^J_{\geq 0}}\omega_{\sigma}\underline{x}^{\sigma}, \text{ with } \omega_{\sigma}= \sum\limits_{i \in J}a_{\sigma,i}(\underline{y})\frac{dx_i}{x_i}+\sum b_{\sigma,s}(\underline{y})dy_s$$
where the coefficients $a_{\sigma}(\underline{y})$, $b_{\sigma}(\underline{y})$ are germs of functions at $p$, without common factor, defined in an open set of $S_J$. We have that $E_J$ is contained in the adapted singular locus $\text{Sing}(\LL,E)$ of $\LL$ when $\omega_{0}\equiv 0$. On the other hand, we say that $\LL$ is \emph{combinatorially regular for $(M,E)$} if $E_J \not\subset \text{Sing}(\LL,E)$ for every $J \in \HH$.

We attach a polyhedra system $\NN_{M,E; \LL}$ to $\LL$ as follows. Given $J \in \HH$, take a point $p \in S_J$ and a generator $\omega$ of $\LL_p$ as before. We define $N_J$ by
$$N_J=\left[\left[\{\sigma \in \ZZ_{\geq 0}^J; \; \omega_{\sigma} \not\equiv 0\}\right]\right].$$
The polyhedra $N_J$ only depend on $\LL$ and $S_J$, moreover, they are compatible with the projections $p_{J_2J_1}$. We obtain in this way the Newton polyhedra system $\NN_{M,E;\LL}$. 

The following statements are equivalent:
\begin{enumerate}
	\item The foliation $\LL$ is combinatorially regular for $(M,E)$.
	\item The Newton polyhedra system is non-singular, that is $\delta(N_J)=0$ for all $J \in \HH$.
\end{enumerate}
Let $\nu_J(\LL)$ be the adapted order of $\LL$ along $E_J$ (See \cite{Can}). Note that 
$$\nu_J(\LL)=\min\left\{\nu_{\underline{x}}\left(\sum x^{\sigma}a_{\sigma,i}(\underline{y})\right), \nu_{\underline{x}}\left(\sum x^{\sigma}b_{\sigma,s}(\underline{y})\right)\right\}_{i,s}=$$
$$=\min\left\{|\sigma| ; \; \sigma \in N_J\right\}=\delta(N_J).$$
As before, we say that $\LL$ is \emph{combinatorially equimultiple along $E_J$} if $\nu_J(\LL) =\nu_K(\LL)$ for every $K\in \HH$ such that $E_K \subset E_J$.
Considering $J \in \HH$ such that $m=\delta(N_J)$ is maximal, we perform the usual blow-up $\pi:(M',E') \rightarrow (M,E)$ centered in $E_J$ and the Newton polyhedra system attached to the strict transform $\LL'$ of $\LL$ is given by the $m$-moderated transform $\Theta^m(\NN_{M,E; \LL})$. As in the preceding example, we obtain a finite sequence of blow-ups centered in combinatorially equimultiple strata $(M',E';\LL')\rightarrow (M,E;\LL)$ such that $\LL'$ is combinatorially regular for $(M',E')$.

\begin{rem}
	
We can obtain an example similar to the preceding one in the case of one-dimensional singular foliations (See \cite{Can2,McQ-P,Pan}) that are locally generated by vector fields of the form
 $$\chi= \sum\chi_{\sigma}\underline{x}^{\sigma}, \text{ with } \chi_{\sigma}= \sum\limits_{i \in J}a_{\sigma,i}(\underline{y})x_i\frac{\partial}{\partial x_i}+\sum b_{\sigma,s}(\underline{y})\frac{\partial}{\partial y_s}.$$
\end{rem}

\subsection{Monomial ideals} 

Let $\mathcal{I}$ be a monomial ideal of $\mathcal{O}_M$ adapted to $E$. Following \cite{Gow}, we define $\mathcal{I}$ to be the data of a finite collection of effective divisors $\{D_s\}_{s \in S}$ of the form $$D_s=\sum_{i \in I} n_i^sE_i.$$ 
We attach a Newton polyhedra system $\NN_{M,E;\mathcal{I}}$ to $\mathcal{I}$ as follows. Let $\sigma^s:I \rightarrow \ZZ_{\geq 0}$ be the map given by $\sigma^s(i)=n^s_i$. For each $J \in \HH$, we define $N_J$ by 
$$N_J=[[\{\sigma^s|_J; \; s \in S\}]].$$
Note that $\NN_{M,E;\mathcal{I}}$ is non-singular if and only if $\mathcal{I}$ is a principal ideal.
Given $J \in \HH$, we can perform the usual blow-up $\pi:(M',E') \rightarrow (M,E)$ centered in $E_J$. The Newton polyhedra system attached to the total transform $\mathcal{I}'$ of $\mathcal{I}$ is given by the total transform of $\NN_{M,E; \mathcal{I}}$.
Now, we apply Theorem \ref{teo:NC} and, as in \cite{Gow}, we obtain a finite sequence of blow-ups $(M',E';\mathcal{I}') \rightarrow (M,E;\mathcal{I})$ such that $\mathcal{I}'$ is a principal ideal.

\section{Hironaka Quasi-Ordinary Polyhedra Systems} \label{sec:HQ-O}
In this section we prove Theorem \ref{teo:ch} for Hironaka quasi-ordinary polyhedra systems.

Given a Hironaka quasi-ordinary system $\DD=(\FF; \{\Delta_{J}\}_{J \in \HH},d)$ and a singular stratum $J$, the transform $\Lambda_J(\DD)=(\pi_J(\FF); \{\Delta'_{J'}\}_{J' \in \HH'},d)$ is also Hironaka quasi-ordinary. Indeed, for every $J'\in \HH'_{\infty}$ and $K=\pi_J^{\#}(J')$, we have $\Delta_{J'}=[[\{\sigma_{J'}\}]]$, with $\sigma_{J'}=\lambda_{J'}(\sigma_K)-e_{J',\infty}$.

\begin{lema} \label{sucesiones}
	The set $\mathcal{A}$ of decreasing sequences of natural numbers is well-ordered for the lexicographical order.
	\begin{proof}	    
		Given a decreasing sequence $\varphi_1 \geq \varphi_2 \geq \cdots \geq \varphi_k \geq \cdots$ in $\mathcal{A}$, we take $k_0=0$ and $k_i=\min \{j \geq k_{i-1}; \; \varphi_{j'}(i)=\varphi_j(i), \, j' \geq j\} < \infty$, for each $i \geq 1$. Let us consider the decreasing sequence $\varphi:\mathbb{N}\rightarrow \mathbb{N}$ given by $\varphi(n)=\varphi_{k_n}(n)$. There is a positive number $l \in \mathbb{N}$ such that $\varphi(n)=\varphi(l)$, for all $n \geq l$. As a consequence we have $\varphi_j=\varphi_{k_l}$, for all $j \geq k_l$.
	\end{proof}       
\end{lema}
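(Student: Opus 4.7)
The lemma is equivalent to showing that every weakly lex-decreasing chain $\varphi_1 \geq_{\text{lex}} \varphi_2 \geq_{\text{lex}} \cdots$ in $\mathcal{A}$ is eventually constant. My plan is a diagonal stabilization argument that invokes the well-orderedness of $\mathbb{N}$ twice: first coordinatewise along the chain, and then once more along the diagonal.

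The first move is to stabilize coordinate by coordinate. The first coordinates $\varphi_j(1)$ form a non-increasing sequence of natural numbers, so they stabilize from some index $k_1$. For $j \geq k_1$ all $\varphi_j$ share the same first coordinate, hence lex comparison reduces to comparing tails starting at coordinate two; the second coordinates then form a non-increasing sequence in $\mathbb{N}$ for $j \geq k_1$ and stabilize from some $k_2 \geq k_1$. Iterating produces the non-decreasing indices $k_1 \leq k_2 \leq \cdots$ with $\varphi_j(i) = \varphi_{k_i}(i)$ whenever $j \geq k_i$, exactly as in the statement of the lemma.

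Next, I would form the diagonal sequence $\varphi(n) = \varphi_{k_n}(n)$ and verify that $\varphi \in \mathcal{A}$. Since $k_{n+1} \geq k_n$ lies in the stability range of coordinate $n$, we have $\varphi_{k_{n+1}}(n) = \varphi_{k_n}(n) = \varphi(n)$, and the decreasing property of $\varphi_{k_{n+1}}$ gives $\varphi(n+1) \leq \varphi_{k_{n+1}}(n) = \varphi(n)$. Being itself a decreasing sequence of natural numbers, $\varphi$ stabilizes from some $l$: $\varphi(n) = \varphi(l)$ for all $n \geq l$.

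The last step, which I expect to be the most delicate, is to deduce $\varphi_j = \varphi_{k_l}$ for every $j \geq k_l$. Coordinates $i \leq l$ are immediate from the coordinatewise stabilization, since $j \geq k_l \geq k_i$. For $i > l$ I would argue inductively on $i$: assuming all $\varphi_j$ with $j \geq k_l$ share their first $i-1$ coordinates, lex ordering among them reduces to comparison at coordinate $i$. For $j \geq k_i$ we have $\varphi_j(i) = \varphi_{k_i}(i) = \varphi(l)$ by stability, while for $k_l \leq j < k_i$ the lex inequality $\varphi_j \geq_{\text{lex}} \varphi_{k_i}$ together with the common prefix yields $\varphi_j(i) \geq \varphi_{k_i}(i) = \varphi(l)$, and the monotonicity of $\varphi_j$ gives $\varphi_j(i) \leq \varphi_j(l) = \varphi(l)$. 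Hence $\varphi_j(i) = \varphi(l)$ for all $j \geq k_l$, the chain is constant past $k_l$, and well-orderedness follows.
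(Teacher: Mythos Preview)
Your proof is correct and follows essentially the same diagonal stabilization strategy as the paper: construct the indices $k_i$ at which successive coordinates stabilize, form the diagonal sequence $\varphi(n)=\varphi_{k_n}(n)$, observe it lies in $\mathcal{A}$ and hence eventually constant from some $l$, and conclude that the chain is constant past $k_l$. The only difference is that you spell out two verifications the paper leaves implicit---why $\varphi$ is decreasing, and the inductive squeeze $\varphi(l)\leq\varphi_j(i)\leq\varphi_j(l)=\varphi(l)$ for $i>l$ and $k_l\leq j<k_i$---which is a welcome clarification rather than a departure in method.
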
 

\begin{prop} \label{prop:qo}
	Every Hironaka quasi-ordinary polyhedra system has reduction of singularities.
\end{prop}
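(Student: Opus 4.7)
The plan is to attach a well-ordered invariant in $\mathcal{A}$ to each Hironaka quasi-ordinary system and to single out a blow-up centre that makes this invariant strictly drop. The class is already known to be stable under characteristic transforms, so one never leaves the Hironaka quasi-ordinary setting. For every $J\in\HH$ set $v(J)=d\,|\sigma_J|\in\ZZ_{\geq 0}$, where $\sigma_J$ is the unique vertex of $\Delta_J$. The compatibility $\sigma_{J_1}=\sigma_{J_2}|_{J_1}$ for $J_1\subseteq J_2$ gives the monotonicity $v(J_1)\leq v(J_2)$, and $J\in\text{Sing}(\DD)$ precisely when $v(J)\geq d$. I define $W(\DD)\in\mathcal{A}$ to be the non-increasing rearrangement, padded by zeros, of the multiset $\{v(J):J\in\text{Sing}(\DD)\}$.

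Pick $J_0$ a minimal element of $\text{Sing}(\DD)$. By minimality, every proper subset $A\subsetneq J_0$ lies in $\HH\setminus\text{Sing}(\DD)$, so $v(A)<d$. A direct reading of the definition of $\Lambda_{J_0}$ shows that for each new stratum $J'\in\HH'_\infty$, written as $J'=(K\setminus J_0)\cup A\cup\{\infty\}$ with $K=\pi_{J_0}^{\#}(J')\supseteq J_0$ and $A\subsetneq J_0$, the unique vertex of the transformed polyhedron satisfies
\[
v(J')=v(K)+v(A)-d<v(K),
\]
the strict inequality coming from $v(A)<d$. The strata of $\HH'_s$ are unchanged, so each newly-created singular stratum has $v$-value strictly smaller than that of the removed stratum it descends from.

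Write $V^{\text{kept}}=\{v(J):J\in\text{Sing}(\DD),\,J\not\supseteq J_0\}$, $V^{\text{removed}}=\{v(K):K\in\HH,\,K\supseteq J_0\}$ and $V^{\text{added}}=\{v(J'):J'\in\HH'_\infty\cap\text{Sing}(\Lambda_{J_0}(\DD))\}$, so that $W(\DD)$ and $W(\Lambda_{J_0}(\DD))$ are the non-increasing rearrangements of $V^{\text{kept}}\cup V^{\text{removed}}$ and $V^{\text{kept}}\cup V^{\text{added}}$ respectively, and the previous step yields $\max V^{\text{added}}<\max V^{\text{removed}}=:m$. Setting $c=\#\{x\in V^{\text{kept}}:x\geq m\}$, both sorted sequences agree at positions $p\leq c$, since in that range both list the $c$ entries of $V^{\text{kept}}$ with value $\geq m$ (the extra entries of $V^{\text{removed}}$ that are $\geq m$ all equal $m$ and are pushed to positions $>c$ in the old sequence). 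At position $c+1$ the old sequence still takes the value $m$ from $V^{\text{removed}}$, whereas the new one takes a value strictly less than $m$, since every remaining entry of $V^{\text{kept}}\cup V^{\text{added}}$ is $<m$. Hence $W(\Lambda_{J_0}(\DD))<_{\text{lex}}W(\DD)$, and Lemma~\ref{sucesiones} forbids an infinite iteration; after finitely many characteristic transforms one reaches $\text{Sing}=\emptyset$. The main obstacle is the pairing of invariant with centre: once $J_0$ is taken minimal in $\text{Sing}(\DD)$, both the key inequality $v(J')<v(K)$ and the lexicographic drop follow immediately from the computation above.
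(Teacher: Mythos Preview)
Your argument is correct and mirrors the paper's: sort the values $d\,\delta(\Delta_J)$ into a decreasing sequence, blow up a minimal singular stratum, use $|\sigma_{A_{J'}}|<1$ to obtain $\delta(\Delta'_{J'})<\delta(\Delta_K)$, and conclude via Lemma~\ref{sucesiones}; the paper takes the invariant over all of $\HH$ and chooses a centre of minimum cardinality rather than inclusion-minimal, but these are inessential variations. One small caveat: you set $v(J)=d|\sigma_J|$ for \emph{every} $J\in\HH$, yet the quasi-ordinary hypothesis only guarantees a unique vertex on singular strata, so $\sigma_J$ need not exist globally---this is harmless, since your proof only evaluates $v$ on singular strata and on subsets $A\subseteq K$ of a singular $K$, where $\Delta_A=\mathrm{pr}_{K,A}(\Delta_K)=[[\{\sigma_K|_A\}]]$ does have a single vertex.
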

\begin{proof}
	Choose a bijective map $s_\DD:\{1,2,\ldots,\#\HH\}\rightarrow \HH$ such that 
	$$\delta(\Delta_{s_\DD(j)}) \geq \delta(\Delta_{s_\DD(j+1)}); \; j \geq 1.$$
	Take the decreasing sequence $\varphi: \mathbb{N} \rightarrow \mathbb{N}$ given by $\varphi(j)=d\delta(\Delta_{s_\DD(j)})$ if $j \leq \#\HH$ and $\varphi(j)= 0$ if $j > \#\HH$. We choose $\varphi$ as lexicographical invariant. Let us see that after an appropriate single blow-up the invariant $\varphi$ drops and then we are done by Lemma \ref{sucesiones}. We select as center a singular stratum $J$ with the minimum number of elements. Given $J' \in \HH'_{\infty}$ and $K=\pi_J^{\#}(J')$, we have
	$$\delta(\Delta'_{J'})=|\sigma_{J'}|=|\lambda_{J'}(\sigma_K)|-1=|\sigma_K|+|\sigma_{A_{J'}}|-1 < \delta(\Delta_K)$$
	and then we obtain  $\varphi' < \varphi$.
\end{proof}
\section{Special Polyhedra Systems} \label{sec: special systems}
In this section we prove Theorem \ref{teo:ch} for the case of special polyhedra systems. More precisely we prove {\bf CRS($n$)} for special polyhedra systems under the induction hypothesis {\bf CRS($n-1$)}.
\subsection{Stability of Special Systems}
A special polyhedra system is either non-singular or special under blow-up. Precisely, we have

\begin{prop} \label{prop:estabilidad especiales}
	Let $\DD$ be a special polyhedra system. Given a singular stratum $J$, we have $\delta(\Lambda_J(\DD)) \leq 1$.
	\begin{proof}
	Let us denote $\DD=(\FF;\{\Delta_J\}_{J \in \HH},d)$ and $\Lambda_J(\DD)=(\pi_J(\FF);\{\Delta'_{J'}\}_{J' \in \HH'},d)$.
	Take $J' \in \HH_{\infty}'$ and $K=\pi_J^{\#}(J')$. We have to see that $\delta(\Delta'_{J'}) \leq  \delta(\Delta_K)=1$. Given $\sigma\in \Delta_K$ with $|\sigma|=1$, we have $\delta(\Delta'_{J'}) \leq |\lambda_{J'}(\sigma)-e_{J',\infty}|=|\sigma|-1+|\sigma_{|A_{J'}}| \leq 1$.
	\end{proof}
\end{prop}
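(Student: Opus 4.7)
The plan is to split the analysis of $\Lambda_J(\DD)=(\pi_J(\FF);\{\Delta'_{J'}\}_{J'\in\HH'},d)$ along the decomposition $\HH'=\HH'_s\cup\HH'_\infty$ and, in each case, bound $\delta(\Delta'_{J'})$ by exhibiting one explicit point of weight at most $1$ inside $\Delta'_{J'}$.

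For $J'\in\HH'_s$ the characteristic transform leaves the polyhedron untouched, $\Delta'_{J'}=\Delta_{J'}$, and the inequality $\delta(\Delta'_{J'})\leq\delta(\DD)=1$ is immediate from the definition of a special system. So all the real work concerns exceptional strata $J'\in\HH'_\infty$. Writing $K=\pi_J^{\#}(J')$, so that $K\supset J$ and $J'=(K\setminus J)\cup A_{J'}\cup\{\infty\}$ with $A_{J'}\subsetneq J$, I would first observe that $\Delta_K$ must contain a point of weight exactly $1$: the compatibility condition $\mathrm{pr}_{K,J}(\Delta_K)=\Delta_J$ combined with nonnegativity of the coordinates gives $\delta(\Delta_K)\geq\delta(\Delta_J)\geq 1$ (since $J$ is singular), while $\delta(\DD)=1$ gives the reverse inequality.

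With such a $\sigma\in\Delta_K$ of weight $1$ in hand, the plan is simply to compute $|\lambda_{J'}(\sigma)-e_{J',\infty}|$ from the explicit formula for $\lambda_{J'}$: the coordinates indexed by $K\setminus J$ and by $A_{J'}$ reproduce $\sigma$ itself, while the $\infty$ coordinate contributes $|\sigma_{|J}|-1$. This last quantity is nonnegative because $|\sigma_{|J}|\geq\delta(\Delta_J)\geq 1$, which also confirms that the point really lies in $\Delta'_{J'}\subset\RR_{\geq 0}^{J'}$. Reorganising the sum then yields $|\sigma|-1+|\sigma_{|A_{J'}}|=|\sigma_{|A_{J'}}|$, which is bounded by $|\sigma_{|J}|=1$ because $A_{J'}\subset J$ and $\sigma$ has nonnegative entries. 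Hence $\delta(\Delta'_{J'})\leq 1$.

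The only delicate point is really notational: one has to keep straight the identification of $J'$ as $(K\setminus J)\cup A_{J'}\cup\{\infty\}$ and to see simultaneously how singularity of $J$, speciality of $\DD$ and the compatibility of the $\Delta_K$ along the projections $\mathrm{pr}_{K,J}$ all conspire to place a weight-one point of $\Delta_K$ exactly on the face where $\sigma_{|K\setminus J}=0$, which is precisely what makes the transformed point have small weight in $\Delta'_{J'}$.
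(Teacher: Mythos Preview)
Your proof is correct and follows essentially the same approach as the paper: for $J'\in\HH'_\infty$ with $K=\pi_J^{\#}(J')$, pick $\sigma\in\Delta_K$ with $|\sigma|=1$ and compute $|\lambda_{J'}(\sigma)-e_{J',\infty}|=|\sigma|-1+|\sigma_{|A_{J'}}|\leq 1$. Your version simply spells out more of the surrounding details (the trivial case $J'\in\HH'_s$, the justification that $\delta(\Delta_K)=1$, and the nonnegativity of the $\infty$-coordinate), but the core computation is identical to the paper's.
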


\begin{rem}
Let us consider a Newton polyhedra system $\NN$ and an integer $d \geq 1$. A stratum $J \in \HH$ is \emph{$d$-equimultiple} if the polyhedra system $\text{Red}_{\overline{\{J\}}\cap \HH}(\NN/d)$ is special. Proposition \ref{prop:estabilidad especiales} means in this context that the multiplicity is stable under $d$-moderated transforms when we perform blow-ups centered in $d$-equimultiple strata.
\end{rem}
\subsection{Hironaka's Strict Tangent Space} \label{sec:STS}

Given a characteristic polyhedron $\Delta \subset \RR^I_{\geq 0}$, with $\delta(\Delta)=1$, we recall that \emph{Hironaka's strict tangent space $T(\Delta)$} of $\Delta$, is defined by
$$T(\Delta)=\{j \in I; \;\exists \, \sigma\in \Delta, \, |\sigma|=1, \, \sigma(j) \neq 0\} \subset I.$$
See \cite{Aro-H-V,Hir,Hir2,Spi}. In this section we present the clasical stability results concerning the behaviour of the strict tangent space in a ``horizontal'' and a ``vertical'' way.

Let us consider a special polyhedra system $\DD=(\FF; \{\Delta_J\}_{J\in \HH},d)$.
\begin{prop}\label{pr:Estab.hor.}
	We have $T(\Delta_{J_2}) \subset T(\Delta_{J_1}) \subset J_1$, for every $J_1,J_2 \in \text{Sing}(\DD)$ with $J_1 \subset J_2$.
	\begin{proof}      
		Given $\sigma \in \Delta_{J_2}$ such that $|\sigma|=1$, the restriction $\sigma_{|J_1} \in \Delta_{J_1}$ also satisfies $|\sigma_{|J_1}|=1$ because $\delta(\Delta_{J_1})=1$. Then $\sigma(j)=0$ for all $j \in J_2\setminus J_1$. Thus, we have both $T(\Delta_{J_2}) \subset J_1$ and $T(\Delta_{J_2}) \subset T(\Delta_{J_1})$.
	\end{proof}        
\end{prop}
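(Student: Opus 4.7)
The plan hinges on two inputs: $\DD$ is special, so $\delta(\DD)=1$, and both $J_1$ and $J_2$ belong to $\text{Sing}(\DD)$, giving $\delta(\Delta_{J_1}) \geq 1$ and $\delta(\Delta_{J_2}) \geq 1$. Combined, this forces $\delta(\Delta_{J_1}) = \delta(\Delta_{J_2}) = 1$, so both strict tangent spaces are genuinely detected by points $\sigma$ of weight exactly one, not merely bounded below. The final inclusion $T(\Delta_{J_1}) \subset J_1$ is immediate from the definition of $T$ applied to $\Delta_{J_1} \subset \RR^{J_1}_{\geq 0}$, so the real content is $T(\Delta_{J_2}) \subset T(\Delta_{J_1})$ together with $T(\Delta_{J_2}) \subset J_1$.

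The key move is to invoke the coherence condition $\Delta_{J_1} = pr_{J_2,J_1}(\Delta_{J_2})$ built into the definition of a polyhedra system. I would pick $\sigma \in \Delta_{J_2}$ with $|\sigma|=1$, form the restriction $\sigma' = \sigma|_{J_1} \in \Delta_{J_1}$, and observe that $|\sigma'| \leq |\sigma| = 1$ since all coordinates are nonnegative. But $\delta(\Delta_{J_1}) = 1$ rules out $|\sigma'| < 1$, so in fact $|\sigma'| = 1$, which forces $\sigma(j)=0$ for every $j \in J_2 \setminus J_1$. This single calculation is the heart of the argument.

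From here both inclusions fall out. Any $j \in T(\Delta_{J_2})$ is witnessed by some $\sigma$ of weight one with $\sigma(j) \neq 0$; the previous paragraph then forces $j \in J_1$, while the restriction $\sigma|_{J_1} \in \Delta_{J_1}$, still of weight one with $\sigma|_{J_1}(j) \neq 0$, simultaneously witnesses $j \in T(\Delta_{J_1})$. There is no substantial obstacle: the only subtlety is confirming that $\sigma|_{J_1}$ really belongs to $\Delta_{J_1}$, which is precisely the polyhedra-system compatibility condition, and checking that the ``special'' hypothesis is used to upgrade the inequality $\delta(\Delta_{J_i}) \geq 1$ to an equality so that the relevant $\sigma$ actually exists.
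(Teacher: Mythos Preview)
Your proposal is correct and follows essentially the same approach as the paper: pick $\sigma\in\Delta_{J_2}$ with $|\sigma|=1$, use $\delta(\Delta_{J_1})=1$ to force $|\sigma_{|J_1}|=1$ and hence $\sigma(j)=0$ for $j\in J_2\setminus J_1$, then read off both inclusions. You are simply more explicit than the paper about why $\sigma_{|J_1}\in\Delta_{J_1}$ (the polyhedra-system compatibility) and why $\delta(\Delta_{J_1})$ equals rather than merely is at least $1$ (the special hypothesis).
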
         
\begin{rem}        
	In the situation of Proposition \ref{pr:Estab.hor.}, we can have $T(\Delta_{J_2}) \ne T(\Delta_{J_1})$. For instance, let us consider  $\Delta=[[(0,0,1),(1,1,0)]] \subset \RR^{\{1,2,3\}}_{\geq 0}$. In this case, we obtain 
	$$\{3\}=T(\Delta) \neq T(\Delta_{\{2,3\}})=\{2,3\}.$$
\end{rem}          
Consider a singular stratum $J \in \text{Sing}(\DD)$ and let us perform the characteristic transform $\DD'=\Lambda_J(\DD)=(\pi_J(\FF); \{\Delta'_{J'}\}_{J'\in \HH'},d)$ of $\DD$ centered in $J$. By Proposition \ref{prop:estabilidad especiales}, we know that $\DD'$ is either non-singular or a special system.

\begin{prop}  \label{prop:vstab sts}
	Consider a singular stratum $J'=(K \setminus J) \cup A_{J'} \cup \{\infty\}\in \HH'_{\infty}$ where $K=\pi_J^{\#}(J')$. We have $T(\Delta_K)\subset A_{J'} \subset J$. Moreover, $T(\Delta_K)\subset T(\Delta'_{J'}) \subset J'$.
	\begin{proof} 
	Given $\sigma \in \Delta_K$ with $|\sigma|=1$, we have $\delta(\Delta'_{J'}) \leq |\lambda_{J'}(\sigma)-e_{J',\infty}|=|\sigma_{|A_{J'}}| \leq 1$.
	As a consequence $|\sigma_{|A_{J'}}|=1$ or equivalently $T(\Delta_K)\subset A_{J'}$. Now, given $j \in T(\Delta_K)$ we have $j\in A_{J'}$. As $|\sigma'|=1$ and $\sigma'(j)=\sigma(j)\neq 0$, then $j\in T(\Delta'_{J'})$.
	\end{proof}        
\end{prop}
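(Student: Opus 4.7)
The plan is to observe that two of the four inclusions are tautological and reduce the remaining two to a single length computation. Indeed, $A_{J'} \subset J$ is built into the definition of $\HH'^K_\infty$ (which requires $A_{J'} \subsetneq J$), and $T(\Delta'_{J'}) \subset J'$ is forced by $\Delta'_{J'} \subset \RR^{J'}_{\geq 0}$. So the content lies in proving $T(\Delta_K) \subset A_{J'}$ and $T(\Delta_K) \subset T(\Delta'_{J'})$, and I aim to extract both from the analysis of a single arbitrary $\sigma \in \Delta_K$ with $|\sigma|=1$.

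The main step is a length identity. Since $K \supset J$ one can split $|\sigma| = |\sigma_{|K\setminus J}| + |\sigma_{|J}|$, and the explicit formula for $\lambda_{J'}$ makes the $A_{J'}$-coordinates contribute twice to $|\lambda_{J'}(\sigma)|$: once as themselves (they lie in $J' \setminus \{\infty\}$) and once inside the sum $|\sigma_{|J}|$ that sits at the $\infty$-coordinate. This yields $|\lambda_{J'}(\sigma)| = |\sigma| + |\sigma_{|A_{J'}}|$, so $|\lambda_{J'}(\sigma) - e_{J',\infty}| = |\sigma_{|A_{J'}}| \leq |\sigma_{|J}| \leq 1$. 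Since this point belongs to $\Delta'_{J'}$, the hypothesis $J' \in \text{Sing}(\DD')$ combined with Proposition \ref{prop:estabilidad especiales} gives $\delta(\Delta'_{J'}) = 1$, and therefore $|\sigma_{|A_{J'}}| = 1 = |\sigma|$. This pins down the support of $\sigma$ inside $A_{J'}$.

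Both remaining inclusions then fall out. For $T(\Delta_K) \subset A_{J'}$: if $j \in T(\Delta_K)$, any witness $\sigma$ has $\sigma(j) \neq 0$, and by the previous paragraph such a $\sigma$ vanishes outside $A_{J'}$, forcing $j \in A_{J'}$. For $T(\Delta_K) \subset T(\Delta'_{J'})$: set $\sigma' := \lambda_{J'}(\sigma) - e_{J',\infty} \in \Delta'_{J'}$; it has $|\sigma'| = 1$ by the identity above, and $\sigma'(j) = \sigma(j) \neq 0$ because $j \in A_{J'} \subset J' \setminus \{\infty\}$. I anticipate no real obstacle; the only mildly delicate point is recognizing the double-counting identity $|\lambda_{J'}(\sigma)| = |\sigma| + |\sigma_{|A_{J'}}|$, since after that the conclusion follows by saturating the chain of inequalities $|\sigma_{|A_{J'}}| \leq |\sigma_{|J}| \leq |\sigma|$.
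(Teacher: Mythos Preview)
Your proof is correct and follows essentially the same route as the paper: both pick $\sigma\in\Delta_K$ with $|\sigma|=1$, compute $|\lambda_{J'}(\sigma)-e_{J',\infty}|=|\sigma_{|A_{J'}}|$, and squeeze this between $\delta(\Delta'_{J'})\geq 1$ and $1$ to force $|\sigma_{|A_{J'}}|=1$, from which both inclusions follow. Your appeal to Proposition~\ref{prop:estabilidad especiales} is harmless but unnecessary, since you already obtained $\delta(\Delta'_{J'})\leq|\sigma_{|A_{J'}}|\leq 1$ directly.
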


\begin{rem}
	The property $T(\Delta_K) \subset A_{J'}$ can be read as saying that ``the new singular strata are in the transform of the strict tangent space''. This is the main classical feature of Hironaka's strict tangent space.
\end{rem}

\begin{cor} \label{cor:espacio tangente}
	Given $J'\in \HH'_{\infty}$ and $K=\pi_J^\#(J')$, if $T(\Delta_K)=J$ then $J' \notin \text{Sing}(\DD')$.
	\begin{proof}
	If $J' \in \text{Sing}(\DD')$ by Proposition \ref{prop:vstab sts}, we have $T(\Delta_K) \subset A_{J'}$, but $A_{J'} \subsetneq J$.
	\end{proof}
\end{cor}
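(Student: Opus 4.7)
The plan is to argue by contradiction using the ``new singular strata lie in the transform of the strict tangent space'' statement from Proposition \ref{prop:vstab sts}. Suppose for contradiction that $J' \in \text{Sing}(\DD')$. Since $J' \in \HH'_\infty$, by the very construction of the blow-up in Section 2 we can write $J' = (K \setminus J) \cup A_{J'} \cup \{\infty\}$ for a unique subset $A_{J'} \subsetneq J$; in particular $A_{J'}$ is a proper subset of $J$.

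Next, I would invoke Proposition \ref{prop:vstab sts} applied to the singular stratum $J'$: that proposition asserts $T(\Delta_K) \subset A_{J'}$. Combined with the hypothesis $T(\Delta_K)=J$, this forces $J \subset A_{J'}$, which contradicts $A_{J'} \subsetneq J$. Hence $J' \notin \text{Sing}(\DD')$, as desired.

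So the entire argument is a short deduction from Proposition \ref{prop:vstab sts} plus the combinatorial definition of $\HH'_\infty$; there is essentially no obstacle, and the only point to verify is the strict inclusion $A_{J'} \subsetneq J$, which is part of the definition of $\HH^{'K}_\infty$ (the center $J$ itself is blown up, so only \emph{proper} subsets $A \subsetneq J$ appear). The corollary is really just the contrapositive statement that if the strict tangent space fills the center, then no infinitely-near stratum can inherit enough of it to remain singular.
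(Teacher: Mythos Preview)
Your proof is correct and follows essentially the same approach as the paper: both argue by contradiction, invoking Proposition~\ref{prop:vstab sts} to obtain $T(\Delta_K)\subset A_{J'}$ and then noting the contradiction with $A_{J'}\subsetneq J$ from the definition of $\HH'^{K}_\infty$. Your write-up simply makes explicit the strict inclusion $A_{J'}\subsetneq J$ that the paper leaves implicit.
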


\subsection{Reduction of Singularities via Maximal Contact} \label{sec:MCF}
In this section we obtain reduction of singularities for special polyhedra systems under the assumption that there is maximal contact with a given stratum.

Let us consider a special polyhedra system $\DD=(\FF;\{\Delta_J\}_{J \in \HH},d)$ and a stratum $T \in \HH$. We say that $T$ has \emph{maximal contact with $\DD$} if $T \subset T(\Delta_J)$, for all $J \in \text{Sing}(\DD)$. We say also that $\FF^T$ is a \emph{maximal contact support fabric of $\DD$}. 
\begin{rem} \label{rk: T singular }
	Let $T \in \HH$ be a stratum that has maximal contact with $\DD$. Recall that Hironaka's projection of $\DD$ over $T$ is given by $\DD^T=(\FF^T; \{\Delta_{J^*}^T\}_{J^* \in \HH^T},d!d)$. Since we are assuming that the polyhedra of $\DD$ are non-empty, we have $\Delta_{J^*}^T=\emptyset$ for all $J \in \HH^T$ if and only if $T \in \text{Sing}(\DD)$. Note that in this case $\Lambda_T(\DD)$ is non-singular as a consequence of Corollary \ref{cor:espacio tangente}.
\end{rem}

\begin{rem} \label{rk: contacto maximal local}
For every local polyhedra system $\DD=\LL(\Delta,d)$, each non-empty stratum $T \subset T(\Delta)=T(\Delta_I)$ has maximal contact with $\DD$. That happens because of the horizontal stability of the strict tangent space stated in Proposition \ref{pr:Estab.hor.}. Indeed, for every singular stratum $J \in \text{Sing}(\DD)$, we have $J \subset I$ and hence $T(\Delta_I)\subset T(\Delta_J)$. 
\end{rem}

\begin{lema} \label{lema: estabilidad contacto maximal} Let us consider the transform $\Lambda_J(\DD)$ where $J \in \text{Sing}(\DD)$. If $T$ has maximal contact with $\DD$, then $T$ also has maximal contact with $\Lambda_J(\DD)$.
\begin{proof}
	 Recall that $\pi_J(\FF)=(I',\HH'=\HH'_s \cup \HH'_{\infty})$. Note that $T \subset J$ with $T \neq J$ and then $T \in \HH'_s$. The maximal contact of $T$ with $\Lambda_J(\DD)$ is assured by the property of vertical stability for the strict tangent space given in Proposition \ref{prop:vstab sts}.
\end{proof}	
\end{lema}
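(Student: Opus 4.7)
The plan is to show directly that $T$ is a stratum of the new support fabric $\pi_J(\FF)$ and that the inclusion $T \subset T(\Delta'_{J'})$ holds for every $J' \in \text{Sing}(\Lambda_J(\DD))$. To do this, I would separate the ``old'' strata in $\HH'_s$, where nothing changes, from the ``new'' ones in $\HH'_\infty$, where vertical stability does the work.

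First I would pin down the location of $T$ relative to the center $J$. Since $J \in \text{Sing}(\DD)$ and $T$ has maximal contact with $\DD$, we get $T \subset T(\Delta_J) \subset J$ by the horizontal stability of Proposition \ref{pr:Estab.hor.}. If $T = J$, then Corollary \ref{cor:espacio tangente} forces $\Lambda_J(\DD)$ to be non-singular and the statement is vacuous; so I assume $T \subsetneq J$. Then $T \not\supset J$, hence $T \notin \overline{\{J\}}$, so $T \in \HH'_s$ and $T$ is still a stratum of $\pi_J(\FF)$.

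Next I would verify the maximal contact condition stratum by stratum. For $J' \in \text{Sing}(\Lambda_J(\DD)) \cap \HH'_s$, the polyhedron is unchanged, $\Delta'_{J'} = \Delta_{J'}$, and $J' \in \text{Sing}(\DD)$, so the original maximal contact hypothesis gives $T \subset T(\Delta'_{J'})$ immediately. For $J' \in \HH'_\infty$ with $K = \pi_J^{\#}(J') \in \overline{\{J\}} \cap \HH$, I would apply Proposition \ref{prop:vstab sts} to conclude $T(\Delta_K) \subset T(\Delta'_{J'})$. To then invoke maximal contact of $T$ with $\DD$ at $K$, I need $K \in \text{Sing}(\DD)$; this follows from the elementary monotonicity $\delta(\Delta_{J_1}) \leq \delta(\Delta_{J_2})$ whenever $J_1 \subset J_2$ (because $\Delta_{J_1}$ is the coordinate projection of $\Delta_{J_2}$ and coordinates are non-negative), applied to $J \subset K$ together with $\delta(\Delta_J) \geq 1$.

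The only slightly delicate point — and the main obstacle if one wanted to write it out fully — is ensuring that the strata $K$ arising as $\pi_J^{\#}(J')$ in the second case are themselves singular, so that the maximal contact hypothesis on $\DD$ can be used; the monotonicity of $\delta$ along inclusions handles this cleanly. Once that is in place, the whole lemma is essentially a packaging of Propositions \ref{pr:Estab.hor.} and \ref{prop:vstab sts}.
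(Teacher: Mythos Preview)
Your proposal is correct and follows essentially the same route as the paper: both arguments hinge on the vertical stability of the strict tangent space (Proposition~\ref{prop:vstab sts}), after first observing that $T\subset J$ and hence $T\in\HH'_s$. Your write-up is simply more explicit than the paper's terse version---you spell out the trivial $\HH'_s$ case, verify that $K=\pi_J^{\#}(J')\in\text{Sing}(\DD)$ via the monotonicity of $\delta$, and treat the edge case $T=J$ separately---whereas the paper leaves these points implicit.
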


\begin{prop} \label{prop: maximal contact}
	Let $T \notin \text{Sing}(\DD)$ be a stratum that has maximal contact with $\DD$ and let us consider Hironaka's projection $\DD^T$ of $\DD$ from $T$. If there is a reduction of singularities for $\DD^T$, then $\DD$ also has a reduction of singularities.
\end{prop}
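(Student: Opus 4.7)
The strategy is to lift the given reduction of singularities of $\DD^T$ to one for $\DD$, blow-up by blow-up, exploiting the fact that maximal contact forces every singular stratum of $\DD$ to contain $T$. The starting observation is that if $J\in\text{Sing}(\DD)$, then $T\subset T(\Delta_J)\subset J$ by the maximal contact hypothesis, and $T\subsetneq J$ since $T\notin\text{Sing}(\DD)$. Hence $J\mapsto J^*:=J\setminus T$ embeds $\text{Sing}(\DD)$ into $\HH^T$, and I first establish the corresponding inclusion at the level of singular loci: $J\in\text{Sing}(\DD)$ implies $J^*\in\text{Sing}(\DD^T)$. The remark preceding Section~\ref{sec:MCF}, together with $T\notin\text{Sing}(\DD)$, guarantees $\Delta^T_{J^*}\neq\emptyset$, and any $\sigma\in\Delta_J$ with $\sum_{t\in T}\sigma(t)<1$ has image $\tau=\nabla_J^T(\sigma)$ satisfying
$$|\tau|=\frac{|\sigma|-\sum_{t\in T}\sigma(t)}{1-\sum_{t\in T}\sigma(t)}\geq 1,$$
because $|\sigma|\geq 1$; thus $\delta(\Delta^T_{J^*})\geq 1$.

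The main technical step is a commutation formula: for every $J\in\text{Sing}(\DD)$, the polyhedra systems $(\Lambda_J(\DD))^T$ and $\Lambda_{J^*}(\DD^T)$ are equivalent. On the support-fabric side, the strata of $\pi_J(\FF)$ containing $T$ split as (i) strata $J'\in\HH'_s$ with $T\subset J'$ and $J\not\subset J'$, matching strata of $(\FF^T)'_s$ via $J'\mapsto J'\setminus T$, and (ii) strata $J'=(K\setminus J)\cup A\cup\{\infty\}\in\HH'_\infty$ where the condition $T\subset J'$ forces $T\subset A$, matching strata $(K^*\setminus J^*)\cup(A\setminus T)\cup\{\infty\}$ of $\pi_{J^*}(\FF^T)$. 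On the polyhedra side, a direct check using $\infty\notin T$ and the explicit formulas for $\lambda_{J'}$ and $\nabla_K^T$ yields the required identification. I expect this step to be the principal obstacle: it is bookkeeping, but requires care with denominators (both sides end up with denominator $d!d$) and with the subtraction of $e_{J',\infty}$ that defines the characteristic transform.

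Granting the commutation, I lift the sequence $\DD^T=\DD^T_0\to\DD^T_1\to\cdots\to\DD^T_k$ with centers $J^*_i\in\text{Sing}(\DD^T_{i-1})$ by defining $\DD_i=\Lambda_{J_i}(\DD_{i-1})$ with $J_i=J^*_i\cup T$. At each step Lemma~\ref{lema: estabilidad contacto maximal} preserves the maximal contact of $T$ with $\DD_i$; the first paragraph applied to $\DD_{i-1}$ shows $J_i\in\text{Sing}(\DD_{i-1})$ so the transform is defined; and the commutation gives $(\DD_i)^T\equiv\DD^T_i$, closing the induction.

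For termination, if $J\in\text{Sing}(\DD_k)$, then maximal contact again forces $T\subsetneq J$, and the first paragraph applied to $\DD_k$ yields $J^*\in\text{Sing}((\DD_k)^T)=\text{Sing}(\DD^T_k)=\emptyset$, a contradiction. Hence $\DD\to\DD_1\to\cdots\to\DD_k$ is the desired reduction of singularities of $\DD$.
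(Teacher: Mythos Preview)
Your approach mirrors the paper's proof: establish a correspondence between $\text{Sing}(\DD)$ and $\text{Sing}(\DD^T)$, prove the commutation $(\Lambda_J(\DD))^T=\Lambda_{J\setminus T}(\DD^T)$ at the level of support fabrics and of polyhedra, and then lift the resolution step by step. The structure and the key computations are the same.

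There is, however, one genuine gap. In your first paragraph you prove only the forward implication
\[
J\in\text{Sing}(\DD)\ \Longrightarrow\ J^*=J\setminus T\in\text{Sing}(\DD^T),
\]
and you correctly use this at the end for termination. But in the lifting step you need the \emph{reverse} implication: starting from a center $J^*_i\in\text{Sing}(\DD^T_{i-1})=\text{Sing}((\DD_{i-1})^T)$, you must show that $J_i=J^*_i\cup T\in\text{Sing}(\DD_{i-1})$ so that the characteristic transform $\Lambda_{J_i}(\DD_{i-1})$ is defined. Your sentence ``the first paragraph applied to $\DD_{i-1}$ shows $J_i\in\text{Sing}(\DD_{i-1})$'' invokes the wrong direction. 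The paper's Step~1 proves both directions: for the converse, given $\sigma\in\Delta_{J}$ with $J=J^*\cup T$, either $|\sigma_{|T}|\geq 1$ and one is done, or $|\sigma_{|T}|<1$ so that $\sigma\in M_J^T$ and the same identity $|\nabla_J^T(\sigma)|=(|\sigma|-|\sigma_{|T}|)/(1-|\sigma_{|T}|)$ together with $|\nabla_J^T(\sigma)|\geq 1$ forces $|\sigma|\geq 1$. You should add this short argument; without it the lifted sequence of blow-ups is not shown to consist of admissible (singular) centers.
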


Recall that $\DD^T=(\FF^T; \{\Delta^T_{J^*}\}_{J^* \in \HH^T},d!d)$ has been introduced in equation (\ref{eq:proyeccion de Hironaka}).
 We prove of the proposition in three steps.

	\emph{Step 1:  There is a one-to-one correspondence between}  $\text{Sing}(\DD)$ \emph{and} $\text{Sing}(\DD^T)$. Let us denote by $\HH_T$ the set $\HH_T=\{J \in \HH; \; T \subset J\}$. Note that $\HH_T$ is an open set of $\HH$. In fact, is the smallest one containing $\overline{\{T\}} \cap \HH$. There is a bijection $\Psi: \HH_T\rightarrow \HH^T$ given by $\Psi(J)=J \setminus T$.	Note that $\text{Sing}(\DD) \subset\HH_T$. 
	We claim that $\Psi(\text{Sing}(\DD))=\text{Sing}(\DD^T)$. Given $J \in \text{Sing}(\DD)$, we have to prove that $|\sigma^*| \geq 1$ for all $\sigma^* \in \Delta^T_{J\setminus T}$. Given such a $\sigma^*$, there is $\sigma \in \Delta_J \cap M_J^T$, satisfying 
	$$\sigma^*=\frac{\sigma|_{J\setminus T}}{1-|\sigma|_T|}.$$ 
	Note that $|\sigma|_T| < 1$, since $\sigma \in M^T_J$. Recalling that $T \subset J$, we have 
	\begin{equation} \label{eq:sigma estrella}
	|\sigma^*|=\frac{|\sigma_{|{J \setminus T}}|+|\sigma|_T|-|\sigma|_T|}{1-|\sigma|_T|}=\frac{|\sigma|-|\sigma|_T|}{1-|\sigma|_T|}.
	\end{equation}
	Then $|\sigma^*| \geq 1$ since $|\sigma| \geq 1$. As a consequence $J \setminus T \in \text{Sing}(\DD^T)$.
	Conversely, if we take $J^* \in \text{Sing}(\DD^T)$, we have to prove that $|\sigma| \geq 1$, for all $\sigma \in \Delta_J$, where $J=J^* \cup T$. Given such a $\sigma$, we have two possibilities. If $|\sigma_{|T}| \geq 1$ we are done. Otherwise $|\sigma_{|T}| < 1$ and then $\sigma \in M_J^T$. In this case, there is $\sigma^* \in \Delta^T_{J^*}$ given by $\sigma^*=\sigma_{|J\setminus T}/(1-|\sigma_{|T}|)$. By equation (\ref{eq:sigma estrella}), we have $(1-|\sigma_{|T}|)|\sigma^*|=|\sigma|-|\sigma_{|T}|$. Moreover $1-|\sigma_{|T}| \leq |\sigma|-|\sigma_{|T}|$ since $|\sigma^*| \geq 1$. As a consequence $|\sigma| \geq 1$ and then $J \in \text{Sing}(\DD)$.

	\emph{Step 2: Commutativity of Hironaka's projections with the blow-up transforms}. We are going to show that 
	$$(\Lambda_J(\DD))^T = \Lambda_{J \setminus T} (\DD^T).$$ 
	We verify this commutativity first for the support fabrics and second for each polyhedron.
	 \begin{enumerate}
	 	\item [a)] \emph{Commutativity for the support fabrics.} 
	 	Let us see that $\pi_J(\FF)^T=\pi_{J \setminus T}(\FF^T)$. Denote 
	 	$$\pi_J(\FF)^T=\Big((I')^T,(\HH')^T\Big), \quad \pi_{J \setminus T}(\FF^T)=\Big((I^T)',(\HH^T)'\Big).$$
	 	The index sets are equal, since $(I')^T=(I \cup \{\infty\})\setminus T = (I \setminus T) \cup \{\infty\}=(I^T)'$.
	  	
	  	Let us see that $(\HH'_s)^T=(\HH^T)'_s$. Note that $K \supset J \Leftrightarrow K \setminus T \supset J \setminus T$ for avery $K \in \HH$, because $T \subset J$. Then, for each $J'^*$, we have
	  	$$J'^* \in (\HH'_s)^T \Leftrightarrow J'^*\cup T \in \HH'_s, \, J'^*\cap T= \emptyset \Leftrightarrow  J'^* \in (\HH^T)'_s.$$	  
		Take $K \in \HH$ with $K \supset J$. Let us see that $(\HH'^K_{\infty})^T=(\HH^T)'^{K \setminus T}_{\infty}$. For each $J'^*$, we have
		$$J'^* \in (\HH'^K_{\infty})^T \Rightarrow J'=J'^*\cup T \in \HH'^K_{\infty} \Rightarrow T \subset A_{J'} \subsetneq J \Rightarrow $$
		$$ \Rightarrow  A_{J'} \setminus T \subsetneq J \setminus T  \Rightarrow J'^* \in (\HH^T)'^{K \setminus T}_{\infty}.$$
		
		$$J'^* \in (\HH^T)'^{K \setminus T}_{\infty} \Rightarrow A_{J'^*} \subsetneq J \setminus T \Rightarrow A = A_{J'^*} \cup T \subsetneq J\Rightarrow$$ $$\Rightarrow J'^*\cup T \in \HH'^K_{\infty}  \Rightarrow J'^* \in (\HH^T)'^{K \setminus T}_{\infty} .$$
		 	
		\item [b)] \emph{Commutativity for the polyhedra.} It is enough to prove that the following diagram commutes
		$$\xymatrix{ M_K^T \cap \RR^K_{\geq 0} \ar[r]^{f_{J'}} \ar[d]_{\nabla^T_K}  & \RR^{J'}_{\geq 0} \cap M_{J'}^T \ar[d]^{\nabla^T_{J'}} \\ \RR^{K\setminus T} \ar[r]^{f_{J' \setminus T}} & \RR^{J' \setminus T}_{\geq 0}}$$	
		where  $J' \in \HH'_{\infty}$, $K= \pi_J^{\#}(J')$, $T \subset J'$, $T \subset K$  and $f_{J'}(\sigma)=\lambda_{J'}(\sigma)-e_{J',\infty}$.  
		Note that $f_{J'}$ is well-defined because $f_{J'}(\sigma)_{|T}=\sigma_{|T}$ for all $\sigma \in \RR^K_{\geq 0}$.

		Given $\sigma \in  M_K^T \cap \RR^K_{\geq 0}$, we denote 
		$$\sigma'=f_{J'}(\sigma), \; \sigma^T=\nabla^T_K(\sigma), \; (\sigma^T)'=f_{J'\setminus T}(\sigma^T), \; (\sigma')^T=\nabla^T_{J'}(\sigma').$$
		We conclude that $(\sigma^T)'=(\sigma')^T$ from the following equalities: 	
		$$(\sigma')^T_{|J' \setminus (T \cup \{\infty\})}=\frac{\sigma_{|J' \setminus (T \cup \{\infty\})}}{1-|\sigma_{|T}|}=(\sigma^T)'_{|J' \setminus (T \cup \{\infty\}};$$
		$$(\sigma')^T(\infty)=\frac{|\sigma_{|J}|-1}{1-|\sigma_{|T}|}=\frac{|\sigma_{|J\setminus T}|+|\sigma_{|T}|-1}{1-|\sigma_{|T}|}=\frac{|\sigma_{|J\setminus T}|}{1-|\sigma_{|T}|}-1=(\sigma^T)'(\infty).$$
	 \end{enumerate}
Hence, $\DD$ has reduction of singularities. This ends the proof of Proposition \ref{prop: maximal contact}.
 
\subsection{Connected Components Decomposition of Desingularization} \label{sec:CP}
The problem of reduction of singularities can be solved by considering one by one each connected component of the singular locus.

\begin{prop} \label{prop: componentes conexas}
Let $\DD$ be a polyhedra system and $\text{Sing}(\DD)= \bigcup\CC_{\alpha}$ the decomposition in connected components of the singular locus. Assume that there is reduction of singularities for each $\text{Red}_{\CC_{\alpha}}(\DD)$. Then there is reduction of singularities for $\DD$.
\end{prop}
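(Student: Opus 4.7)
The plan is to process the connected components $\CC_\alpha$ one by one, lifting each reduction of singularities of $\text{Red}_{\CC_\alpha}(\DD)$ given by the hypothesis to a sequence of characteristic transforms of the current global polyhedra system. Since $\HH$ is finite, so is $\text{Sing}(\DD)$, and the decomposition $\text{Sing}(\DD) = \bigcup_{\alpha=1}^r \CC_\alpha$ is a finite union. The key point will be that the blow-ups centered at singular strata of one component do not affect the other components, so any chosen ordering $\CC_1, \ldots, \CC_r$ works.

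First I would establish a non-interaction property. Fix $J \in \CC_\alpha$ singular and any $\beta \neq \alpha$. The set $\overline{\{J\}} \cap \HH$ is connected (any proper clopen decomposition would force $J$ into both halves by downward-closedness within the subspace topology) and contained in $\text{Sing}(\DD)$ by upper semicontinuity of $\delta$; hence $\overline{\{J\}} \cap \HH \subset \CC_\alpha$. Consequently, no $K \in \CC_\beta$ can satisfy $K \supset J$, so $\CC_\beta \subset \HH'_s$ and is preserved by $\pi_J$. The same argument applied to strata $J' \subset K$ with $K \in \CC_\beta$ shows that no such $J'$ contains $J$ (otherwise $K \supset J' \supset J$ would force $K \in \CC_\alpha$), so the entire open set $\HH_{\CC_\beta}$ lies in $\HH'_s$, and therefore $\text{Red}_{\CC_\beta}(\Lambda_J(\DD)) = \text{Red}_{\CC_\beta}(\DD)$.

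Second, I would invoke Corollary \ref{cor:continuidad} (transported from support fabrics to polyhedra systems, as the polyhedra attached to unchanged strata are unchanged) to identify $\Lambda_J(\DD)|_{\UU'}$ with $\Lambda_J(\text{Red}_{\CC_\alpha}(\DD))$ whenever $J \in \CC_\alpha$, where $\UU' = (\pi_J^\#)^{-1}(\HH_{\CC_\alpha})$. Iterating this identification, the hypothetical reduction of singularities of $\text{Red}_{\CC_1}(\DD)$, centered at a finite sequence of singular strata $J^{(1)}, \ldots, J^{(k_1)}$, lifts term-by-term to a sequence of characteristic transforms of $\DD$ at the same centers. After performing them, the singular locus inside the transformed $\HH_{\CC_1}$ is empty and, by the non-interaction property, the remaining singular locus equals $\bigcup_{\beta \geq 2}\CC_\beta$ with each $\text{Red}_{\CC_\beta}$ unchanged. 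I would then iterate the argument for $\CC_2, \ldots, \CC_r$, producing a finite sequence of characteristic transforms of $\DD$ at whose end $\text{Sing} = \emptyset$.

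I expect the main obstacle to be the careful bookkeeping ensuring that, after processing $\CC_1, \ldots, \CC_{\alpha-1}$, the component $\CC_\alpha$ is still a bona fide connected component of the singular locus of the current system and its restriction is still equivalent to the original $\text{Red}_{\CC_\alpha}(\DD)$, so that the hypothesis can be reapplied. This is precisely what the non-interaction property delivers, but making it rigorous requires a clean identification of the open sets $\HH_{\CC_\beta}$ before and after each batch of blow-ups, together with the verification that new singular strata created in $\HH'_\infty$ during the $\CC_\alpha$-stage do not escape into the neighborhood of any $\CC_\beta$ with $\beta \neq \alpha$.
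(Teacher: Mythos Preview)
Your proposal is correct and follows essentially the same approach as the paper: process the connected components one at a time, lift the reduction of singularities of each $\text{Red}_{\CC_\alpha}(\DD)$ to the global system via the same centers, and observe that the other components are untouched so that the singular locus shrinks to $\bigcup_{\beta\neq\alpha}\CC_\beta$ with each $\text{Red}_{\CC_\beta}$ unchanged. In fact you supply more justification than the paper does---the paper simply asserts the two key properties (equivalence of $\DD_\beta$ with $\text{Red}_{\CC_\beta}(\DD^k)$ for $\beta\neq\alpha$, and the new singular-locus decomposition) and then iterates, whereas you argue the non-interaction explicitly via $\overline{\{J\}}\cap\HH\subset\CC_\alpha$ and Corollary~\ref{cor:continuidad}.
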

In order to simplify notation, we denote $\DD_{\alpha}=\text{Red}_{\CC_{\alpha}}(\DD)$ and $\FF_{\alpha}$ the support fabric of $\DD_{\alpha}$. Let us consider a connected component $\CC_{\alpha}$. By hypothesis, $\DD_{\alpha}$ has a reduction of singularities 
$$
\DD_{\alpha} \rightarrow \DD_{\alpha}^1 \rightarrow \cdots \rightarrow \DD_{\alpha}^k.
$$
This sequence induces a sequence of characteristic transforms 
$$
\DD \rightarrow \DD^1 \rightarrow \cdots \rightarrow \DD^k,
$$
obtained performing blow-ups in the same centers. Let $\FF^k$ be the support fabric of $\DD^k$. The polyhedra system $\DD^k$ satisfies the following properties: 
	\begin{enumerate}
		\item For each $\beta \neq \alpha$, there is an equivalence between $\DD_{\beta}$ and $\text{Red}_{\CC_{\beta}}(\DD^k)$ given by $\text{Id}_{\mathcal{I}}$, where $\mathcal{I}=\mathcal{I}_{\FF_{\beta}}=\mathcal{I}_{\FF^k_{\beta}}$, with $\FF^k_{\beta}=\text{Red}_{\CC_{\beta}}(\FF^k)$.
		\item The  decomposition of the singular locus $\text{Sing}(\DD^k)$ in connected components is given by $\text{Sing}(\DD^k)= \bigcup_{\beta \neq \alpha}\CC_{\beta}$.
	\end{enumerate}
Proof of Proposition \ref{prop: componentes conexas} is concluded applying the same argument finitely many times. 
				
\subsection{Existence of Maximal Contact}
\begin{defin} \label{def: consistente}
Let $\DD$ be a special polyhedra system and let $\text{Sing}(\DD)= \bigcup\CC_{\alpha}$ be the decomposition of the singular locus 
in connected components. We say that $\DD$ is \emph{consistent} if for each connected component $\CC_{\alpha}$, there is a stratum $T_{\alpha}$ that has maximal contact with $\text{Red}_{\CC_{\alpha}}(\DD)$. By convention, non-singular polyhedra systems are consistent.
\end{defin}

\begin{prop} \label{prop:crs special} Under the induction hypothesis CRS($n-1$), every $n$-dimensional consistent polyhedra system has reduction of singularities.
	\begin{proof}
		Let $\DD$ be a $n$-dimensional consistent polyhedra system. Let us consider the decomposition of the singular locus in connected components $\text{Sing}(\DD)=\bigcup\CC_{\alpha}$. In order to simplify notation, we write $\DD_{\alpha}=\text{Red}_{\CC_{\alpha}}(\DD)$. For each $\CC_{\alpha}$, we have a stratum $T_{\alpha}$ that has maximal contact with $\DD_{\alpha}$. If $T_{\alpha} \in \text{Sing}(\DD)$, by Remark \ref{rk: T singular }, $\DD_{\alpha}$ has reduction of singularities. Otherwise, since Hironaka's projection $\DD_{\alpha}^{T_{\alpha}}$ has dimension lower than $n$, there is reduction of singularities for $\DD_{\alpha}^{T_{\alpha}}$ and by Proposition \ref{prop: maximal contact} we obtain reduction of singularities for $\DD_{\alpha}$. By Proposition \ref{prop: componentes conexas} we conclude the proof.
	\end{proof}
\end{prop}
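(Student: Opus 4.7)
The plan is to exploit the machinery already established and reduce everything to the induction hypothesis via Hironaka's projection. First I would apply Proposition \ref{prop: componentes conexas} to decompose the problem: it suffices to produce a reduction of singularities for each $\DD_{\alpha}=\text{Red}_{\CC_{\alpha}}(\DD)$, where $\{\CC_{\alpha}\}$ are the connected components of $\text{Sing}(\DD)$. This reduces the problem to the case in which we have a single connected component and a single stratum $T=T_{\alpha}$ of maximal contact with $\DD_{\alpha}$, which exists by the consistency hypothesis (Definition \ref{def: consistente}).

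Next I would split into two cases according to whether $T\in\text{Sing}(\DD_{\alpha})$ or not. In the singular case, the maximal contact condition forces $T(\Delta_T)\supset T$, but since $\Delta_T\subset\RR^T_{\geq 0}$ we automatically have $T(\Delta_T)\subset T$, hence $T(\Delta_T)=T$. By Remark \ref{rk: T singular } (which rests on Corollary \ref{cor:espacio tangente}), the single characteristic transform $\Lambda_T(\DD_{\alpha})$ is already non-singular. So one blow-up finishes this case. In the non-singular case $T\notin\text{Sing}(\DD_{\alpha})$, I would invoke Proposition \ref{prop: maximal contact}, which says that a reduction of singularities of the Hironaka projection $\DD_{\alpha}^{T}$ lifts to a reduction of singularities of $\DD_{\alpha}$.

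To close the argument I need $\DD_{\alpha}^{T}$ to admit a reduction of singularities. Since $T\in\HH$ is non-empty, we have $\text{dim}(\FF^{T})<\text{dim}(\FF)\leq n$, so $\text{dim}(\DD_{\alpha}^{T})\leq n-1$ and the induction hypothesis $\text{CRS}(n-1)$ applies. Putting the two cases together over all connected components produces reductions of singularities for every $\DD_{\alpha}$, and Proposition \ref{prop: componentes conexas} glues these into a reduction of singularities for $\DD$.

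The whole argument is essentially bookkeeping once the framework is in place; the genuinely delicate step is not inside this proof at all but in checking that the required hypotheses (existence of maximal contact on each connected component, commutativity of projection with blow-up, and dimension strictly decreasing after projection) are available. If I had to point to a subtle spot inside the argument itself, it is the case $T\in\text{Sing}(\DD_{\alpha})$: one must remember that maximal contact combined with $T\subset T(\Delta_T)\subset T$ forces equality, which is precisely what triggers Corollary \ref{cor:espacio tangente} and causes the singularity to vanish after a single blow-up.
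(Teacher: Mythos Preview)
Your proof is correct and follows essentially the same route as the paper: decompose along connected components via Proposition~\ref{prop: componentes conexas}, then on each piece use the maximal-contact stratum $T_\alpha$ to either finish in one blow-up (Remark~\ref{rk: T singular }) or descend to $\DD_\alpha^{T_\alpha}$ and invoke CRS($n-1$) through Proposition~\ref{prop: maximal contact}. The only difference is cosmetic---you spell out the equality $T(\Delta_T)=T$ in the singular case, which the paper leaves implicit in its appeal to Remark~\ref{rk: T singular }.
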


\begin{rem}
There exist non-consistent polyhedra systems. For instance, consider the support fabric $\FF=(I,\HH)$ with $I=\{1,2,3,4\},$
$$\HH=P(I)\setminus \big\{\{1,4\},\{1,2,4\},\{1,3,4\},I\big\}.$$ 
Take over it the polyhedra system $\DD=(\FF; \{\Delta_J\}_{J\in \HH},d)$, given by 
$$\Delta_{\{1,2,3\}}=\big[\big[(0,0,1),(1/2,1,0)\big]\big], \, \Delta_{\{2,3,4\}}=\big[\big[(1,0,0),(0,1,1/2)\big]\big]$$ 
and all their projections. We have $\text{Sing}(\DD)=\big\{ \{2,3\}, \{1,2,3\}, \{2,3,4\}\big\}$ is connected. 
We have that $T(\Delta_{\{1,2,3\}})=\{3\}$ and $T(\Delta_{\{2,3,4\}})=\{2\}$, then there is no stratum $T$ with maximal contact.
\end{rem}

The following statement says that every non-consistent special system of dimension $n$ can be transformed in a finite number of steps in a consistent one. More precisely

\begin{prop} \label{prop: hacer consistente}
	Under the induction hypothesis  CRS($n-1$), for every $n$-dimensional special polyhedra system $\DD$, there is a finite sequence of characteristic transforms $\DD \rightarrow \DD'$ such that $\DD'$ is consistent.
\begin{proof}
	
The main idea in this proof is to reduce the problem to the case of polyhedra systems obtained by blow-up of local systems.The existence of maximal contact in those systems is assured by Remark \ref{rk: contacto maximal local} and by the stability results in Lemma \ref{lema: estabilidad contacto maximal}.	
	
Let us consider the set $\KK$ of closed strata in $\HH$ and the open set $\UU=\HH \setminus \KK$.
Recall that $\KK$ is a finite, totally disconnected closed set.
We have $\text{dim}(\DD_{|\UU}) < n$, since the dimension is reached only at closed points. Then $\DD_{|\UU}$ has a reduction of singularities
$$\DD_{|\UU} \rightarrow \DD_{|\UU}^1 \rightarrow \cdots \rightarrow \DD_{|\UU}^k.$$
This sequence gives rise to a sequence of characteristic transforms 
\begin{equation} \label{eq: sucesion global}
\DD \rightarrow \DD^1 \rightarrow \cdots \rightarrow \DD^k
\end{equation}
obtained performing blow-ups in the same centers. Let us prove that $\DD^k$ is consistent. 

Write $\DD_{|\UU}^k=\left(\FF_{|\UU}^k, \{\Delta^k_{J'}\}_{J' \in \HH_{|\UU}^k},d\right)$ and $\DD^k=\left(\FF^k,\{\Delta^k_{J'}\}_{J' \in \HH^k}, d\right)$.
By Corollary \ref{cor:continuidad}, we know that $\FF_{|\UU}^k=\FF^k|_{\UU_k}$ where $\UU_k=(\pi^{\#}_k)^{-1}(\UU)$ and $\pi^{\#}_k:\HH^k \rightarrow \HH$ is given by the sequence of equation (\ref{eq: sucesion global}).
We have $\text{Sing}(\DD^k) \cap \UU_k = \emptyset$ since $\DD_{|\UU}^k$ is non-singular. Moreover, we have $\HH^k=\UU_k \cup (\pi^{\#}_k)^{-1}(\KK)$. As a consequence, the singular locus $\text{Sing}(\DD^k)$ is contained in $(\pi^{\#}_k)^{-1}(\KK)$, that we write as
$$\left(\pi^{\#}_k\right)^{-1}(\KK)= \bigcup_{J \in \KK}\KK_J, \text{ where } \KK_J=\left(\pi^{\#}_k\right)^{-1}(J).$$
Since $\KK$ is totally disconnected, we have $\KK_{J_1} \cap \KK_{J_2}= \emptyset$ for every $J_1, J_2 \in \KK$ with $J_1 \neq J_2$. We conclude that, given a connected component $\CC$ of $\text{Sing}(\DD^k)$, there is a unique stratum $J \in \KK$ such that $\CC \subset \KK_J$.
On the other hand, we have $T(\Delta_J) \subset T(\Delta^k_{J'})$ for every $J' \in \CC$, because of the vertical stability property of the strict tangent space stated in Proposition \ref{prop:vstab sts}. Then the non-empty stratum $T(\Delta_J)$ has maximal contact with $\text{Red}_{\CC}(\DD_k)$.
\end{proof}
\end{prop}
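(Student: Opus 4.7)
The plan is to remove the (totally disconnected) set of closed strata from $\HH$ and apply the induction hypothesis to what remains; this makes sense because the maximum dimension $n$ is attained only at closed strata. Lifting the resulting reduction back to the global system pushes every singularity into a ``bubble'' over some closed stratum, and the strict tangent space at that closed stratum will automatically furnish maximal contact, via vertical stability.

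More precisely, I would let $\KK\subset\HH$ denote the set of closed strata, a finite set that is discrete in the induced topology. Setting $\UU=\HH\setminus\KK$, every stratum of cardinality $n$ lies in $\KK$, so $\mathrm{dim}(\DD|_{\UU})<n$. Applying \textbf{CRS($n-1$)} to $\DD|_{\UU}$ gives a reduction of singularities
$$\DD|_{\UU}\rightarrow \DD|_{\UU}^1\rightarrow\cdots\rightarrow \DD|_{\UU}^k,$$
whose centers are singular strata in $\UU$, hence singular in $\DD$ as well. Corollary \ref{cor:continuidad} lifts this to a sequence of characteristic transforms $\DD\rightarrow \DD^1\rightarrow\cdots\rightarrow \DD^k$. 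Writing $\UU_k=(\pi_k^{\#})^{-1}(\UU)$, the restriction $\DD^k|_{\UU_k}=\DD|_{\UU}^k$ is non-singular, so
$$\mathrm{Sing}(\DD^k)\subset (\pi_k^{\#})^{-1}(\KK)=\bigsqcup_{J\in\KK}\KK_J,$$
and because $\KK$ is discrete and $\pi_k^{\#}$ is continuous, these pieces are pairwise disjoint closed subsets, hence clopen in $\mathrm{Sing}(\DD^k)$: every connected component $\CC$ sits inside a unique $\KK_J$.

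To establish consistency of $\DD^k$, I would iterate Proposition \ref{prop:vstab sts} along the blow-up chain $J=J^{(0)}\to J^{(1)}\to\cdots\to J^{(k)}=J'$ connecting $J\in\KK$ to any $J'\in\CC$. At each step the stratum is either unaffected (so both polyhedron and strict tangent space are preserved), or one has $T(\Delta^{i-1}_{J^{(i-1)}})\subset T(\Delta^i_{J^{(i)}})$ by vertical stability. The composite yields $T(\Delta_J)\subset T(\Delta^k_{J'})$ for all $J'\in\CC$; moreover, $T(\Delta_J)$ is non-empty because $\DD$ is special and the existence of a singular descendant of $J$ forces $\delta(\Delta_J)=1$. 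Hence $T(\Delta_J)$ has maximal contact with $\mathrm{Red}_{\CC}(\DD^k)$.

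The main obstacle I anticipate is checking that $T(\Delta_J)$ still represents a stratum of $\HH^k$, i.e., has not been blown away during the sequence. This is controlled by horizontal stability (Proposition \ref{pr:Estab.hor.}): every singular center $J''\subsetneq J$ in the sequence satisfies $T(\Delta_J)\subset J''$, so the blow-up at $J''$ only removes strata containing $J''$ and leaves $T(\Delta_J)$ untouched, unless $T(\Delta_J)=J''$; in that degenerate case, however, Corollary \ref{cor:espacio tangente} makes every new stratum above $J$ non-singular, so no singular component $\CC\subset\KK_J$ arises and there is nothing left to prove.
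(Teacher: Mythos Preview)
Your proof follows the paper's argument step for step: remove the closed strata $\KK$, apply \textbf{CRS($n-1$)} to $\DD|_{\UU}$, lift the resulting sequence to $\DD$, and invoke vertical stability of the strict tangent space to obtain maximal contact from $T(\Delta_J)$. Your last paragraph is unnecessary: once you have established $T(\Delta_J)\subset T(\Delta^k_{J'})\subset J'$ for any $J'\in\CC\subset\HH^k$, openness of $\HH^k$ already gives $T(\Delta_J)\in\PP(J')\subset\HH^k$, so there is no danger of $T(\Delta_J)$ being ``blown away''.
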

\begin{cor}
	Under the induction hypothesis  CRS($n-1$), there is a reduction of singularities for every $n$-dimensional special polyhedra system.
\end{cor}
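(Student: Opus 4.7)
The plan is to obtain the corollary as a direct combination of the two preceding propositions. Given an $n$-dimensional special polyhedra system $\DD$, I would first invoke Proposition \ref{prop: hacer consistente} under CRS($n-1$) to produce a finite sequence of characteristic transforms
$$\DD \rightarrow \DD_1 \rightarrow \cdots \rightarrow \DD_s = \DD'$$
such that $\DD'$ is consistent. Then, applying Proposition \ref{prop:crs special} (again under CRS($n-1$)) to the consistent polyhedra system $\DD'$, I would obtain a reduction of singularities
$$\DD' \rightarrow \DD'_1 \rightarrow \cdots \rightarrow \DD'_k$$
with $\mathrm{Sing}(\DD'_k) = \emptyset$. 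Concatenating both sequences yields a reduction of singularities for $\DD$.

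Before concatenating, I need to verify that the hypotheses of Proposition \ref{prop:crs special} apply to $\DD'$. The dimension of a polyhedra system is invariant under characteristic transforms, since the underlying support fabric has invariant dimension under blow-ups, so $\dim(\FF_{\DD'}) = n$. Moreover, by Proposition \ref{prop:estabilidad especiales}, each characteristic transform of a special system either produces a non-singular system (in which case we are already finished and may stop the process there, declaring $\DD'=\DD_i$ with $\mathrm{Sing}(\DD_i)=\emptyset$ and omitting the second stage entirely) or produces another special system. Thus we may assume $\DD'$ is itself special, hence consistent and special and $n$-dimensional as required.

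There is no genuine obstacle here, only a bookkeeping point: one must recognize that ``consistent'' in Definition \ref{def: consistente} includes non-singular systems by convention, so the early-termination branch described above is harmless. Once this is acknowledged, the corollary falls out formally from the two propositions, so the proof reduces to two lines citing them in sequence.
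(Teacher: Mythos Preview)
Your proposal is correct and is exactly the argument the paper intends: the corollary is stated without proof precisely because it is the immediate concatenation of Proposition~\ref{prop: hacer consistente} and Proposition~\ref{prop:crs special}. Your additional checks (dimension invariance under blow-ups, stability of speciality via Proposition~\ref{prop:estabilidad especiales}, and the convention that non-singular systems are consistent) are the right hygiene but are not made explicit in the paper.
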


\section{General Polyhedra Systems} \label{sec:GPS}

In this section we consider the case of $n$-dimensional general polyhedra systems. In order to prove it, we assume the hypothesis 
\begin{description}
	\item CRS$^{\text{sp}}$($n$): $n$-dimensional special polyhedra systems have reduction of singularities.
\end{description}	

Note that  CRS($n-1$) implies CRS$^{\text{sp}}$($n$). The main idea is to transform a given general polyhedra system into a Hironaka quasi-ordinary one, using the hypothesis  CRS$^{\text{sp}}$($n$) and many of the ideas in Spivakovsky's work \cite{Spi}. Then we are done in view of Proposition \ref{prop:qo}.

\subsection{Spivakovsky's Invariant}
Let $\DD=(\FF;\{\Delta_J\}_{J\in \HH},d)$ be a polyhedra system. We recall that the fitting of $\DD$ is given by $\widetilde{\DD}=(\FF;\{\widetilde{\Delta}_J\}_{J \in \HH}, d)$, where $\widetilde{\Delta_J}=\Delta_J-w_{\Delta_J}$. We define \emph{Spivakovsky's invariant} $\text{Spi}_\DD$ by
$$\text{Spi}_\DD = \max\left\{\delta(\widetilde{\Delta_J}); \; J \in \text{Sing}(\DD)\right\}.$$
Note that $\text{Spi}_\DD = 0$ if and only if $\DD$ is a Hironaka quasi-ordinary polyhedra system.

We define the set $S(\DD)$ by $S(\DD)=\left\{J\in \text{Sing}(\DD); \; \delta(\widetilde{\Delta_J}) = \text{Spi}_\DD \right\}$.
\begin{lema}

	For every $J \in S(\DD)$, we have $\text{Spi}_{\Lambda_J(\DD)} \leq \text{Spi}_\DD$.
	\begin{proof}
	 Let us denote $\Lambda_J(\DD)=(\FF'; \{\Delta'_{J'}\}_{J' \in \HH'},d)$ and $\lambda=\text{Spi}_\DD$. We write for simplicity $w_K$ instead of $w_{\Delta_K}$. We need to prove that 
	 $$\delta(\widetilde{\Delta'_{J'}}) \leq \delta(\widetilde{\Delta_K}); \quad J' \in \HH', \, K=\pi_J^\#(J').$$  
	 Note that $\delta(\widetilde{\Delta_K})=\delta(\Delta_K)-|w_K|$. If $J' \in \HH'_s$ we are done. If $J' \in \HH'_{\infty}$, then we have $w_{J'}(j)=w_K(j)$ if $j \neq \infty$ and $w_{J'}(\infty)=\delta(\Delta_J)-1$. Moreover, $K \in S(\DD)$, since $K \supset J$.
	 Take $\sigma \in \Delta_K$ such that $|\sigma-w_K|=\delta(\widetilde{\Delta_K})$. Then
		$$\delta(\widetilde{\Delta'_{J'}}) \leq |\lambda_{J'}(\sigma)-e_{J'\infty}-w_{J'}|=|\sigma|+|\sigma_{|A_{J'}}|-1-(|w_{|J' \setminus \{\infty\}}|)+(\delta(\Delta_J)-1)=$$
		$$=\delta(\widetilde{\Delta_K})+|w_K| +|\sigma_{|A_{J'}}|-|w_{(K\setminus J)\cup A_{J'}}|-\delta(\widetilde{\Delta_J})-|w_J|= 
		\lambda + |\sigma_{|A_{J'}}| - |w_{A_{J'}}|- \lambda=$$
		$$=|\sigma_{|A_{J'}}-w_{A_{J'}}| \leq |\sigma| = \delta(\widetilde{\Delta_K}).$$
	\end{proof}
\end{lema}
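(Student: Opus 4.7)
My plan is to follow the calculation suggested by the shape of the statement: fix $J\in S(\DD)$, perform the characteristic transform, and bound $\delta(\widetilde{\Delta'_{J'}})$ for each $J'\in\HH'$ individually. For $J'\in \HH'_s$ the polyhedron is unchanged, so $\delta(\widetilde{\Delta'_{J'}})=\delta(\widetilde{\Delta_{J'}})\leq \text{Spi}_\DD$ by definition. All the work is in the case $J'\in \HH'_\infty$ with $K=\pi_J^\#(J')$.

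For that case, I would first pin down the fitting vector $w_{J'}$ of $\Delta'_{J'}=[[\lambda_{J'}(\Delta_K)]]-e_{J',\infty}$. Since the projections $\mbox{pr}_{J_2,J_1}$ are compatible with the polyhedra, the fitting vectors are compatible too, i.e.\ $w_{J_1}=w_{J_2}|_{J_1}$ whenever $J_1\subset J_2$. Applying this to $\lambda_{J'}$ coordinate by coordinate gives $w_{J'}(j)=w_K(j)$ for $j\neq\infty$, whereas $w_{J'}(\infty)=\min_{\tau\in\Delta_K}|\tau_{|J}|-1=\delta(\Delta_J)-1$. Summing components,
\[
|w_{J'}|=|w_K|-|w_J|+|w_{A_{J'}}|+\delta(\Delta_J)-1,
\]
after splitting $J'\setminus\{\infty\}=(K\setminus J)\cup A_{J'}$ and using $w_{A_{J'}}=w_J|_{A_{J'}}=w_K|_{A_{J'}}$.

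Next, I would observe the horizontal stability \emph{at the top}: since $K\supset J$ and $\Delta_J=\mbox{pr}_{K,J}(\Delta_K)$, any $\sigma\in\Delta_K$ projects to $\sigma|_J\in\Delta_J$ with $|\sigma|-|w_K|\geq |\sigma|_J|-|w_J|$; hence $\delta(\widetilde{\Delta_K})\geq \delta(\widetilde{\Delta_J})=\lambda:=\text{Spi}_\DD$. Combined with $K\in\text{Sing}(\DD)$ (the singular locus is closed and $K\supset J$), this forces $K\in S(\DD)$, so $\delta(\widetilde{\Delta_K})=\lambda$.

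Now take $\sigma\in\Delta_K$ with $|\sigma-w_K|=\lambda$. By the preceding identities,
\[
\delta(\widetilde{\Delta'_{J'}})\leq|\lambda_{J'}(\sigma)-e_{J',\infty}-w_{J'}|=|\sigma|+|\sigma_{|A_{J'}}|-1-|w_{J'}|,
\]
and plugging in the formula for $|w_{J'}|$ together with $|w_J|-\delta(\Delta_J)=-\lambda$ collapses the right side to $|\sigma_{|A_{J'}}|-|w_{A_{J'}}|$. Since $A_{J'}\subset J\subset K$ and $\sigma-w_K\geq 0$ componentwise, this last quantity is at most $|\sigma-w_K|=\lambda$, concluding the bound. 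The only step that I expect to require care is justifying that $w_{J'}(\infty)=\delta(\Delta_J)-1$ and that the fittings commute with the projections used to define $\lambda_{J'}$ and $\mbox{pr}_{K,A_{J'}}$; once those identifications are in place the remainder is bookkeeping.
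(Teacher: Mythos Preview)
Your proposal is correct and follows essentially the same approach as the paper: handle $J'\in\HH'_s$ trivially, compute $w_{J'}$ for $J'\in\HH'_\infty$, observe $K\in S(\DD)$, pick $\sigma\in\Delta_K$ realizing $\delta(\widetilde{\Delta_K})$, and reduce the bound to $|\sigma_{|A_{J'}}-w_{A_{J'}}|\leq|\sigma-w_K|=\lambda$. Your version is in fact a bit more explicit than the paper's in two places: you justify $K\in S(\DD)$ via the inequality $\delta(\widetilde{\Delta_K})\geq\delta(\widetilde{\Delta_J})$ (which the paper asserts without argument), and you spell out why $|\sigma_{|A_{J'}}-w_{A_{J'}}|\leq\lambda$ using $\sigma-w_K\geq 0$ componentwise and $w_{A_{J'}}=w_K|_{A_{J'}}$.
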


\subsection{Resolution of General Systems}
Here we prove that Spivakovsky's invariant decreases strictly after a ``well-chosen'' finite sequence of blow-ups. We do it 
by means of Spivakovsky's projection introduced below and the hypothesis CRS$^{\text{sp}}$($n$).

Let  $\DD=(\FF;\{\Delta_J\}_{J \in \HH},d)$ be a singular polyhedra system with $\text{Spi}_{\DD} \neq 0$. We define \emph{Spivakovsky's projection $\DD^{sp}$} by  
$$\DD^{sp}=(\FF; \{\Delta_J^{sp}\}_{J \in \HH},d^2), \text{ where } \Delta_J^{sp}=\left[\left[\widetilde{\Delta_J}/\text{Spi}_\DD \cup \Delta_J\right]\right].$$
We have that $\DD^{sp}$ is a special polyhedra system and $\text{Sing}(\DD^{sp})=S(\DD)$ since  
$$\delta(\Delta_J^{sp})=\min \left\{\delta\left(\widetilde{\Delta_J}/\text{Spi}_{\DD}\right), \delta (\Delta_J)\right\} \leq 1.$$

\begin{prop} \label{prop: crs gen}
Under the hypothesis CRS$^{\text{sp}}$($n$), there is a finite sequence of characteristic transforms 
$$
\DD=\DD^0 \rightarrow \DD^1 \rightarrow \cdots \rightarrow \DD^k
$$ 
such that $\text{Spi}_{\DD^k} < \text{Spi}_\DD$ and the center $J^i$ of the characteristic transform $\DD^{i} \rightarrow \DD^{i+1}$ belongs to $S(\DD^i)$ for each $i=0, 1, \ldots, k-1$. 
	\begin{proof}
		We consider $J \in S(\DD)$ and denote $\lambda =\text{Spi}_\DD$ . We have two situations: 
		\begin{enumerate}
			\item  $\text{Spi}_{\Lambda_J(\DD)} < \lambda$. (In this case, we are done).
			\item  $\text{Spi}_{\Lambda_J(\DD)} = \lambda$.
		\end{enumerate}
		
		Suppose that $\text{Spi}_{\Lambda_J(\DD)} = \lambda$. Let us prove the commutativity of Spivakovsky's projections with the blow-up transforms. More precisely, given a stratum $J' \in \HH'_{\infty}$ and $K= \pi_J^{\#}(J')$, we want to see that
		\begin{equation} \label{eq:caso general}
		\left[\left[ \widetilde{[[f_{J'}(\Delta_K)]]}/\lambda \cup [[f_{J'}(\Delta_K)]]  \right]\right]=
		\left[\left[ f_{J'}\left(\left[\left[{\widetilde{\Delta_K}}/{\lambda} \cup \Delta_K\right]\right]\right)\right]\right],
		\end{equation}		
		where $f_{J'}: \RR^K_{\geq 0} \rightarrow \RR^{J'}_{\geq 0}$ is defined by $f_{J'}(\sigma)=\lambda_{J'}(\sigma)-e_{J',\infty}$.
		That is, recalling the definition of $\lambda_{J'}$,  $f_{J'}(\sigma)(j)=\sigma(j)$ if $j \neq \infty$ and $f_{J'}(\sigma)(\infty)= |\sigma_{|J}|-1$. 

It is not difficult to see that, the proof of equality (\ref{eq:caso general}) is reduced to prove the equality ${\widetilde{f_{J'}(\Delta_K)}}/{\lambda}=f_{J'}\left({\widetilde{\Delta_K}}/{\lambda}\right)$. That is, we want to show that
$$\frac{(\sigma'-w_{J'})}{\lambda}=f_{J'}\left(\frac{\sigma-w_K}{\lambda}\right), \text{ where } \sigma \in \Delta_K, \; \sigma'=f_{J'}(\sigma).$$
Note that we have $w_{J'}(j) = w_{K}(j)$ if $j \neq \infty$. Then we just need to see what happens with $j=\infty$. Recall that $w_{J'}(\infty)=\delta(\Delta_J)-1$, $\delta(\Delta_J)=\delta(\widetilde{\Delta_J})+ |w_J|$ and $\delta(\widetilde{\Delta_J})=\lambda$. Then
$$\frac{(\sigma'-w_{J'})(\infty)}{\lambda}= \frac{|\sigma_{|J}|-(s+|w_J|)}{\lambda}=\frac{|\sigma_{|J}-w_J|}{\lambda} -1 =
f_{J'}\left(\frac{\sigma-w_K}{\lambda}\right)(\infty).$$

By CRS$^{\text{sp}}$($n$), there is a reduction of singularities of $\DD^{sp}$. Then the second situation cannot be repeated forever and we are done.
\end{proof}
\end{prop}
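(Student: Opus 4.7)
My plan is to transfer the problem to Spivakovsky's projection $\DD^{sp}$, which is by construction a special polyhedra system with $\text{Sing}(\DD^{sp})=S(\DD)$, and then invoke the hypothesis CRS$^{\text{sp}}$($n$). This yields a reduction of singularities
$$\DD^{sp}=(\DD^{sp})^0 \to (\DD^{sp})^1 \to \cdots \to (\DD^{sp})^k$$
with $\text{Sing}((\DD^{sp})^k) = \emptyset$. I will then perform the same sequence of blow-ups on $\DD$ itself, producing $\DD \to \DD^1 \to \cdots \to \DD^k$, and argue that the invariant $\text{Spi}$ must strictly decrease below $\lambda := \text{Spi}_\DD$ at some intermediate step $j \le k$.

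The argument runs by induction on $i$, maintaining the joint hypothesis that $\text{Spi}_{\DD^i} = \lambda$ and $(\DD^i)^{sp} = (\DD^{sp})^i$. At step $i$, the center $J^i$ lies in $\text{Sing}((\DD^{sp})^i) = \text{Sing}((\DD^i)^{sp}) = S(\DD^i)$, so the preceding lemma applies and gives $\text{Spi}_{\DD^{i+1}} \le \lambda$. If strict inequality holds at some step, I stop and declare victory; otherwise I need to propagate the identity $(\DD^{i+1})^{sp} = (\DD^{sp})^{i+1}$ in order to continue the induction.

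The main technical step, and where I expect the real work to lie, is this commutativity of Spivakovsky's projection with the characteristic transform under the assumption that $\text{Spi}$ stays equal to $\lambda$: namely, $\Lambda_J(\DD)^{sp} = \Lambda_J(\DD^{sp})$. On each non-exceptional stratum this is automatic, and on a stratum $J' \in \HH'_\infty$ with $K = \pi_J^\#(J')$, it reduces to the pointwise identity $\widetilde{f_{J'}(\Delta_K)}/\lambda = f_{J'}(\widetilde{\Delta_K}/\lambda)$, where $f_{J'}(\sigma) = \lambda_{J'}(\sigma) - e_{J',\infty}$. All coordinates $j \neq \infty$ are immediate because $w_{J'}(j) = w_K(j)$ there; the $\infty$-coordinate is where the hypothesis $\text{Spi}_{\Lambda_J(\DD)} = \lambda$ is really used, by combining $w_{J'}(\infty) = \delta(\Delta_J) - 1$ with the decomposition $\delta(\Delta_J) = |w_J| + \delta(\widetilde{\Delta_J}) = |w_J| + \lambda$.

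Granted the commutativity, the conclusion is forced. If $\text{Spi}_{\DD^i} = \lambda$ held for every $i \le k$, the induction would give $(\DD^k)^{sp} = (\DD^{sp})^k$. But $(\DD^{sp})^k$ is non-singular, so $S(\DD^k) = \text{Sing}((\DD^k)^{sp}) = \emptyset$, which by the very definition of $S$ forces $\text{Spi}_{\DD^k} < \lambda$, a contradiction. Hence a strict drop of $\text{Spi}$ must occur at some step $j \le k$, and truncating the sequence at that step gives the desired chain of characteristic transforms with centers in $S(\DD^i)$.
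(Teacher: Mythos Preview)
Your proposal is correct and follows essentially the same route as the paper: reduce to the special system $\DD^{sp}$ via CRS$^{\text{sp}}(n)$, and show that as long as $\text{Spi}$ stays at $\lambda$, Spivakovsky's projection commutes with the characteristic transform, with the only nontrivial check being the $\infty$-coordinate identity using $\delta(\widetilde{\Delta_J})=\lambda$ and $w_{J'}(\infty)=\delta(\Delta_J)-1$. Your write-up is in fact more explicit than the paper's in setting up the induction and in spelling out the terminal contradiction $S(\DD^k)=\emptyset$; one small imprecision is that the equality $\delta(\widetilde{\Delta_J})=\lambda$ comes from $J\in S(\DD)$ rather than from the hypothesis $\text{Spi}_{\Lambda_J(\DD)}=\lambda$ (the latter is what guarantees that $(\Lambda_J(\DD))^{sp}$ is again obtained by dividing by $\lambda$).
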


Now the proof of Theorem \ref{teo:ch} is completed.

\vspace{2mm}
\textbf{Acknowledgements} I would like to express my gratitude to Professor Felipe Cano, for all the time devoted to the supervision of this work. I am also grateful with all the referees suggestions that have improved the text.

The author is partially supported by the Ministerio de Educaci\'on, Cultura y Deporte of Spain (FPU14/02653 grant) and by the Ministerio de Econom\'ia y Competitividad from Spain, under the Project ``Algebra y geometr\'ia en sistemas din\'amicos y foliaciones singulares.'' (Ref.:  MTM2016-77642-C2-1-P).

\end{document}